\newcommand{\rr}{\mathbb{R}}
\newcommand{\zz}{\mathbb{Z}}
\newcommand{\cc}{\mathbb{C}}
\newcommand{\hh}{\mathbb{H}}
\newcommand{\pp}{\mathbb{P}}
\newcommand{\Vol}{{\rm Vol}}
\newcommand{\vol}{{\rm vol}}
\newcommand{\I}{\mbox{${\mathbb I}$}}
\newcommand{\tr}{{\rm tr}}
\newcommand{\SO}{{\rm SO}}
\newcommand{\SU}{{\rm SU}}
\newcommand{\Sp}{{\rm Sp}}
\newcommand{\Spin}{{\rm Spin}}
\newcommand{\U}{{\rm U}}
\newcommand{\so}{\mbox{${\mathfrak s \mathfrak o}$}}
\newcommand{\g}{\mbox{${\mathfrak g}$}}
\newcommand{\kf}{\mbox{${\mathfrak k}$}}
\newcommand{\m}{\mbox{${\mathfrak m}$}}
\numberwithin{equation}{section}
\newtheorem{thm}{Theorem}[section]
\newtheorem{defn}[thm]{Definition}
\newtheorem{lem}[thm]{Lemma}
\newtheorem{prop}[thm]{Proposition}
\newtheorem{cor}[thm]{Corollary}
\newtheorem{remark}[thm]{Remark}
\newtheorem{ex}[thm]{Example}
\newenvironment{example}{\begin{ex} \em}{\end{ex}}
\newenvironment{rmk}{\begin{remark} \em}{\end{remark}}
\begin{document}

\title[]{Instability of some Riemannian manifolds with real Killing spinors}

\author{Changliang Wang}
\address{Max-Planck-Institut f\"ur Mathematik, Vivatsgasse 7, Bonn 53111, Germany}
\email{cwangmath@mpim-bonn.mpg.de}

\author{M. Y.-K. Wang}
\address {Department of Mathematics and Statistics, McMaster University, Hamilton, Ontario, L8S4K1 Canada}
\email{wang@mcmaster.ca}

\subjclass[2010]{Primary 53C25}

\keywords{linear stability, $\nu$-entropy, Sasaki Einstein metrics, real Killing spinors}

\date{revised \today}

\begin{abstract}
 We prove the instability of some families of Riemannian manifolds with non-trivial real Killing spinors.
These include the invariant Einstein metrics on the Aloff-Wallach spaces $N_{k, l}=\SU(3)/i_{k, l}(S^{1})$ (which are
all nearly ${\rm G}_2$ except $N_{1,0}$), and  Sasaki Einstein circle bundles over certain irreducible Hermitian symmetric spaces.
We also prove the instability of most of the simply connected non-symmetric compact homogeneous Einstein spaces of
dimensions $5, 6, $ and $7$, including the strict nearly K\"ahler ones (except ${\rm G}_2/\SU(3)$).
\end{abstract}

\maketitle

\section{\bf Introduction}

In this article we will derive the instability of some families of simply connected closed Einstein manifolds most of which admit
a non-trivial real Killing spinor. One consequence of our work is the existence of examples of unstable Einstein manifolds with
non-trivial real Killing spinors whose Euclidean metric cones realize all the possible irreducible special holonomy types.

Recall that for a spin manifold $(M^n, g)$, a Killing spinor $\sigma$ is a section of the complex spinor bundle which satisfies the equation
$$  \nabla_X \sigma = c \, X \cdot \sigma $$
for all tangent vectors $X$, where $\nabla$ is the spinor connection induced by the Levi-Civita connection of $g$, $\cdot$ denotes
Clifford multiplication, and $c$ is a priori a complex constant. By the fundamental work of T. Friedrich and his colleagues it is
now well-known that $c$, if nonzero, is either purely imaginary or real. Furthermore, the metric $g$ must be Einstein with
Einstein constant $\Lambda = 4c^2 (n-1)$.

In the $c=0$ case, the metric $g$ has restricted holonomy properly contained in $\SO(n)$. Calabi-Yau, hyperk\"ahler, torsion free
${\rm G}_2$ and ${\rm Spin}(7)$ manifolds belong to this class. These Einstein manifolds all turn out to be stable by the work of
Dai-Wang-Wei \cite{DWW05}. When $c$ is purely imaginary and $(M, g)$ is complete, the classification was achieved by H. Baum \cite{Bau89},
and proofs of the stability of the Einstein metrics were given, first by Kr\"oncke in \cite{Kr17}, and later
by the first author in \cite{Wan17}.

In the real case, an important conceptual classification was given by \cite{Ba93}, which can be summarized by the
statement that $(M, g)$ admits a non-trivial real Killing spinor iff its Euclidean metric cone admits a
non-trivial parallel spinor. Of course the detailed classification of these manifolds includes the study of Sasaki Einstein manifolds
(see e.g. \cite{BFGK91}, \cite{BG08}) in odd dimensions and nearly K\"ahler $6$-manifolds (see e.g. \cite{FH17}).
Furthermore, T. Friedrich \cite{Fr80} gave a lower bound for the eigenvalues of the Dirac operator on closed spin manifolds with positive
scalar curvature that depended on the dimension and minimum value of the scalar curvature. This result was later generalized in
\cite{Hi86} where the positivity (resp. minimum value) of the scalar curvature was replaced by the positivity
(resp. value) of the first eigenvalue of the conformal Laplacian. In both cases, the equality case is characterized
by manifolds admitting a non-trivial real Killing spinor. By comparison, the equality case of the Lichnerowicz estimate
for the first eigenvalue of the Laplace-Beltrami operator on a manifold with positive Ricci curvature is characterized by the
round spheres, which are stable and happen also to have a maximal family of Killing spinors.

In addition to their intrinsic interest within Differential Geometry, manifolds with real Killing spinors are of great interest
in Mathematical Physics. In the 1980s, such manifolds, especially ones of dimension six or seven, were independently investigated
by theoretical physicists in their pursuit of Kaluza-Klein compactifications in supergravity theories \cite{DNP86}.
More recently, interest in these spaces from the physics community stems from the AdS/CFT correspondence, see e.g., \cite{GMSW05}.
For all the above reasons, there is good motivation to study the stability problem for manifolds admitting
real Killing spinors.

We shall actually use various different notions of stability for Einstein metrics in this paper.
These are different from notions of stability used by physicists, see e.g., \cite{DNP86}, \cite{GiHa02},
\cite{GiHaP03} .  All Einstein manifolds under consideration hereafter will have positive Einstein constant.
Unless otherwise stated, we will exclude the case of round spheres.

The first stability notion comes from the fact that for a closed manifold $M^{n}$  Einstein metrics
are precisely the critical points of the normalized total scalar curvature functional
\begin{equation}
\widetilde{\bf S}(g)=\frac{1}{(\Vol(M, g))^{\frac{n-2}{n}}}\int_{M}s_{g}\,d\vol_{g}
\end{equation}
where in the above $s_{g}$ is the scalar curvature of the Riemannian metric $g$ on $M^{n}$. Since this functional is invariant
under the action of the diffeomorphisms of $M$ and is locally minimizing along conformal change directions, it is customary to
restrict $\widetilde{\bf S}$ to the space of Riemannian metrics with constant scalar curvature and fixed volume. The tangent space
to this ILH-manifold consists of the {\em TT-tensors}, i.e., symmetric $2$-tensors satisfying $\tr_g(h) = 0 $ and ${\delta}_g h = 0$
(\cite{Bes87}, section 4.G). The second variation of $\widetilde{\bf S}$ is then given by
\begin{equation}  \label{2ndvar-Hilbert}
\widetilde{\bf S}^{\prime\prime}_{g}(h, h)=\frac{-1}{2(\Vol(M, g))^{\frac{n-2}{n}}}\int_{M}\langle\nabla^{*}\nabla h-2\mathring{R}h, h\rangle \,d\vol_{g}
\end{equation}
where $(\mathring{R}h)_{ij}$ is defined to be $ \sum_{ k, l} \,R_{ikjl}h^{kl}$ and our convention for the curvature tensor is
$R_{X, Y} = \nabla_{[X, Y]} - [\nabla_X, \nabla_Y ]$.

\begin{defn}  \label{stability-def}
A closed Einstein manifold $(M, g)$ is

\noindent{$($a$)$} $\widetilde{\bf S}$-stable if $g$ is a local maximum of $\widetilde{\bf S}$
restricted to the space of Riemannian metrics on $M$ with constant scalar curvature and the same volume as $g$;

\noindent{$($b$)$} $\widetilde{\bf S}$-linearly stable if
     $\langle \nabla^* \nabla h - 2 \mathring{R} h, h \rangle_{L^2(M, g)}  \geq 0 $ for all
      TT-tensors $h$ on $M$.

\end{defn}

Note that $\widetilde{\bf S}$-stability is the first notion of stability mentioned by Koiso  (\cite{Koi80}, p. 52),
while the second notion is  weaker than that given in Definition 2.7 there. Both notions of stability in the above include the
possibility of non-trivial (resp. infinitesimal) Einstein deformations (which may not be integrable in general).
In the first case, the value of the restricted functional would be unchanged, while in
the second case one would get an eigentensor of the Lichnerowicz Laplacian with eigenvalue equal to twice
the Einstein constant $\Lambda$, owing to the identity $\nabla^* \nabla - 2 \mathring{R} = -(\Delta_L + 2 \Lambda \I)$
for an Einstein manifold. The $\widetilde{\bf S}$-coindex of $g$ is the dimension (necessarily finite by elliptic theory)
of the maximal negative definite subspace for the quadratic form
\begin{equation}  \label{quadratic}
 {\mathscr Q}(h, h) := \langle \nabla^* \nabla h - 2 \mathring{R} h, h \rangle_{L^2(M, g)}.
\end{equation}

The corresponding notions of instability are given by negation. Hence $\widetilde{\bf S}$-linear instability
implies $\widetilde{\bf S}$-instability. Moreover, $\widetilde{\bf S}$-linear instability implies $\nu$-linear instability and further also $\nu$-instability (see Definition $\ref{nu-stability-def}$ below). Then by Theorem 1.3 in \cite{Kr15},
(since $\Lambda > 0$) it also implies that $g$ is {\em dynamically unstable} for the Ricci flow.

Another notion of stability comes from the $\nu$-entropy of Perelman. For detailed information about this
functional and its second order properties we refer the reader to \cite{Pe02}, \cite{CHI04}, \cite{CM12}, and \cite{CH15}.
For us the important facts about the $\nu$-entropy to recall are that its value is unchanged by the action of diffeomorphisms
and homotheties, it is monotonic increasing along Ricci flows, and its critical points consist of shrinking
gradient Ricci solitons (which include Einstein metrics with positive $\Lambda$).
%One can now restrict the $\nu$-entropy to the space ${\mathscr C}(M) \oplus (\ker \tr_g \cap \ker \delta_g)$, where ${\mathscr C}(M)$
%is the space of smooth functions on $M$ with zero average, and $\delta_g$ is the divergence operator
%$- \sum_i (\nabla_i h)(e_i, \cdot) $ on symmetric $2$-tensors $h$.
At an Einstein metric $g$,  the second
variation of the $\nu$-entropy is given (up to a positive constant) by $- \frac{1}{2} {\mathscr Q}(h, h)$ on the subspace
$ \ker \tr_g \cap \ker \delta_g$ of TT-tensors. However, unlike the case of the $\widetilde{\bf S}$ functional,  the second
variation is no longer always positive on ${\mathscr C}(M)g$--the positive directions are given by eigenfunctions
 of the Laplacian of $g$ corresponding to eigenvalues less than $2\Lambda$.
(Our convention for eigenvalues is given by $\Delta \phi = - \lambda \phi$ with $\lambda \geq 0$.)

\begin{defn} \label{nu-stability-def}
A closed Einstein manifold $(M, g)$ with Einstein constant $\Lambda$ is

\noindent{$($a$)$} $\nu$-stable if $g$ is a local maximizer of the $\nu$-entropy;

\noindent{$($b$)$} $\nu$-linearly stable if the second variation of the $\nu$-entropy is negative semi-definite on
          ${\mathscr C}(M)g \oplus (\ker \tr_g \cap \ker \delta_g)$.
\end{defn}

By \cite{CH15}, $\nu$-linear stability is equivalent to ${\mathscr Q}(h, h) \geq 0$ for all
TT-tensors and $\lambda_1(M, g) \leq 2 \Lambda$. Consequently, there are two sources contributing to
$\nu$-linear instability, and $\widetilde{\bf S}$-linear instability implies $\nu$-linear instability.
We will discuss the restriction of the second variation of the $\nu$-entropy to ${\mathscr C}(M)g$ in
greater detail in section \ref{instab-conf}, where we deduce the $\nu$-linear instability of
some homogeneous Einstein metrics which admit real Killing spinors. Here we only note the interesting fact
that destablizing directions coming from conformal deformation by eigenfunctions necessarily deform a
homogeneous Einstein metric away from the space of homogeneous metrics.

Before stating our results on instability, we need to recall a few more facts about simply connected
manifolds admitting a non-trivial real Killing spinor. Because of our assumption of simple connectivity
and our exclusion of round spheres, such a manifold is de Rham irreducible and cannot be a symmetric space
(p. 35, \cite{BFGK91}, Theorem 13). By the results of B\"ar \cite{Ba93}, if its Euclidean cone has $\SU(m+1)$
holonomy, $m \geq 2$, then $(M^{2m+1}, g)$ is Sasaki Einstein. (The implicit scaling involved is
choosing $\Lambda = 2m$, whence $c = \pm \frac{1}{2}$.)
Conversely, a simply connected Sasaki Einstein manifold is spin \cite{Mo97} and admits non-trivial
real Killing spinors (\cite{FrK90}, Theorem 1). The dimension of the space of real Killing spinors
is $2$ and the chiral nature (i.e., whether or not both signs of $c$ occur) of these spinors depends
on the parity of $m$.

If the  Euclidean cone of $(M^{4m+3}, g)$ has $\Sp(m+1)$ holonomy, $m \geq 1$, then $(M^{4m+3}, g)$ is $3$-Sasakian.
In this case the dimension of the space of real Killing spinors is $m+2$ and only one sign of $c$ occurs
once the orientation is fixed \cite{W89}.

Finally, if the Euclidean cone has $\Spin(7)$ (resp. ${\rm G}_2$ ) holonomy, then by \cite{BFGK91} and \cite{Ba93}
$(M, g)$ has a nearly ${\rm G}_2$ (resp. a strict nearly K\"ahler) structure, and the space of real
Killing spinors has dimension $1$. The converse statements are proved respectively in
\cite{BFGK91} and \cite{Gr90}. We also refer to \cite{FKSM97} for nearly ${\rm G}_{2}$ structures and Killing spinors.

An interesting family of simply connected closed Riemannian manifolds which are nearly ${\rm G}_2$
are the Aloff-Wallach spaces $N_{k, l} = \SU(3)/U_{k,l}$, where $k,l$ are relatively prime integers
and $U_{k,l}$ is the circle ${\rm diag}(e^{2\pi i k\theta}, e^{2\pi i l \theta}, e^{-2 \pi i (k+l) \theta})$
in $\SU(3)$. It is well-known that these manifolds are spin and, up to isometry, they admit two $\SU(3)$-invariant
Einstein metrics \cite{W82}, \cite{CR84}, \cite{PP84}, \cite{KoV93}, \cite{Nik04}. Except for the spaces
$N_{1, -1}$ and $N_{1,1}$, all the Euclidean cone metrics of these $\SU(3)$-invariant Einstein metrics
have holonomy $\Spin(7)$ (see, \cite{CR84}, \cite{BFGK91}, and \cite{Ba93}). Topologically speaking, the $N_{k,l}$
exhibit infinitely many homotopy types, and there exists pairs which are homeomorphic but not diffeomorphic \cite{KS91}.

\begin{thm} \label{Aloff-Wallach}
The invariant Einstein metrics on the Aloff-Wallach manifolds $N_{k, l}$ described above are all
$\widetilde{\bf S}$-linearly unstable, and therefore, $\nu$-linearly unstable.
\end{thm}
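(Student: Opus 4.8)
The strategy is to exhibit, for each Aloff--Wallach space $N_{k,l}$ and each of the two $\SU(3)$-invariant Einstein metrics, an explicit TT-tensor $h$ on which the quadratic form ${\mathscr Q}(h,h) = \langle \nabla^* \nabla h - 2 \mathring{R} h, h\rangle_{L^2}$ is negative. Since everything is homogeneous, I would work at the level of the isotropy representation: write $\su(3) = \un_{k,l} \oplus \m$ as an $\Ad(U_{k,l})$-module, decompose $\m$ into its irreducible summands (for generic $(k,l)$ there are four summands: the one-dimensional piece tangent to the $S^1$-direction orthogonal to $\un_{k,l}$ in the maximal torus, and three two-dimensional root-space pieces), and correspondingly decompose the space of invariant symmetric $2$-tensors $(S^2\m)^{U_{k,l}}$. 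An invariant symmetric $2$-tensor is a diagonal combination of the summand metrics together with possibly one off-diagonal term; imposing $\tr_g h = 0$ cuts this down, and $\delta_g h = 0$ is automatic for invariant tensors on a homogeneous space when $h$ is built from the right pieces (or can be arranged). On such invariant $h$, $\nabla^*\nabla h$ and $\mathring{R}h$ are computed purely algebraically from the structure constants of $\su(3)$ and the metric parameters, so ${\mathscr Q}$ becomes a finite-dimensional quadratic form in a handful of real parameters.

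Concretely I would proceed as follows. First, set up the standard parametrization of the invariant metrics: $g = x_1\,(\cdot)|_{\m_1} + x_2\,(\cdot)|_{\m_2} + x_3\,(\cdot)|_{\m_3} + x_0\,(\cdot)|_{\m_0}$ relative to (minus) the Killing form, and recall the known values of $(x_0,x_1,x_2,x_3)$ at the two Einstein metrics from \cite{W82,CR84,PP84,Nik04}. Second, compute the curvature operator $\mathring{R}$ restricted to the invariant TT-tensors using the homogeneous curvature formulas (e.g.\ via the structure constants $[ijk]$ in the notation of Wang--Ziller / Park--Sakane), together with the Casimir/Lichnerowicz operator $\nabla^*\nabla = \Delta_L - 2\mathring{R} + 2\Lambda$ acting on invariant tensors — on invariant tensors $\nabla^*\nabla$ is itself just an algebraic operator (the "Casimir of the isotropy representation" correction). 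Third, evaluate ${\mathscr Q}$ on the specific traceless invariant tensor given by rescaling the summand metrics against each other, e.g.\ $h = a_1 g|_{\m_1} + a_2 g|_{\m_2} + a_3 g|_{\m_3} + a_0 g|_{\m_0}$ with $\sum a_i \dim\m_i = 0$, and show the resulting quadratic form in $(a_0,a_1,a_2,a_3)$ has a negative eigenvalue. One clean way: diagonalize the directions tangent to the space of invariant metrics — these are exactly the directions in which the Einstein metric sits as a critical point of the scalar curvature functional among invariant metrics, and the coindex of that finite-dimensional critical point gives a negative direction for ${\mathscr Q}$ whenever the Einstein metric is not a local max of $\widetilde{\bf S}$ within the invariant metrics (which holds here because the two invariant Einstein metrics are, respectively, of saddle type / a local min of the restricted scalar curvature among invariant volume-normalized metrics). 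Finally, verify uniformly in $(k,l)$ — the structure constants depend on $(k,l)$ only through a bounded rational expression, so after normalizing one checks the sign is constant; the special cases $N_{1,0}$, $N_{1,1}$, $N_{1,-1}$ (where the isotropy decomposition degenerates or extra symmetry appears, changing the number of summands) are handled separately by the same computation with the appropriate refined decomposition.

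I expect the main obstacle to be twofold. First, bookkeeping: the curvature computation on $S^2\m$ for a four-summand homogeneous space with a nontrivially embedded circle is lengthy, and one must be careful that $\delta_g h = 0$ genuinely holds for the chosen $h$ (invariant symmetric $2$-tensors are automatically divergence-free on isotropy-irreducible pieces, but across inequivalent summands one should check, or instead project onto the TT-part and argue the conformal/Lie-derivative parts do not affect the sign). Second, and more seriously, the uniformity in $(k,l)$: one must show the destabilizing direction and the sign of ${\mathscr Q}$ persist as $(k,l)$ ranges over all coprime pairs, including the limiting regimes $k/l \to 0, 1, -1, \infty$ where summand dimensions or the Einstein-metric parameters approach degenerate values. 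The cleanest route is probably to produce a single algebraic inequality in the normalized parameter $t = $ (a symmetric function of $k,l$) ranging over a compact interval, and to observe that at the two Einstein solutions of the scalar-curvature equation the Hessian of $\widetilde{\bf S}|_{\text{inv}}$ already has a negative eigenvalue — this reduces the whole theorem to the (already known) fact that neither $\SU(3)$-invariant Einstein metric on $N_{k,l}$ is a local maximum of the scalar curvature functional restricted to invariant metrics of fixed volume, which in turn is visible from the graph of the normalized scalar curvature along the two-parameter family of invariant metrics.
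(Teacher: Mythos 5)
Your overall strategy coincides with the paper's: restrict $\widetilde{\bf S}$ to the finite-dimensional family of diagonal invariant metrics, show that its Hessian at each Einstein metric has a strictly positive direction, observe that the corresponding invariant variation is divergence-free (the paper proves this in Lemma \ref{divergencefree} by a bi-invariance argument, exactly as you anticipate), and subtract the trace part to obtain a destabilizing TT-tensor. The difficulty is that you have outsourced the one step that carries all of the content.

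You reduce the theorem to ``the (already known) fact that neither $\SU(3)$-invariant Einstein metric on $N_{k,l}$ is a local maximum of the scalar curvature functional restricted to invariant metrics of fixed volume,'' asserting this is visible from a graph. This is a genuine gap, for three reasons. First, it is not a citable fact holding uniformly in $(k,l)$; it is precisely what must be proved. The paper proves it by computing the Hessian of $\widetilde{\bf S}(g(\alpha,\beta,\gamma,\delta))$ in the $(\gamma,\delta)$-directions at the Castellani--Romans/Page--Pope solutions and reducing positivity of the relevant discriminant, via the change of variables $(\ref{ChangeVariables})$, to the single explicit inequality $f(c)=-392c^{4}-273c^{3}+812c^{2}+840c+168>0$ for $c\in[-1,-2/\sqrt{5}\,]\cup[2/\sqrt{5},1]$; this is exactly where the uniformity in $(k,l)$ is established, since the entire dependence on $(p,q)$, hence on $(k,l)$, is absorbed into the parameter $c$. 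Without some such computation your argument is empty at the crucial point. Second, even granting that the Einstein metric is not a local maximum of the restricted functional, that yields only a direction of nonnegative second variation; $\widetilde{\bf S}$-linear instability requires a direction on which the Hessian is strictly positive, so a qualitative ``saddle-type'' statement does not suffice. Third, the additional Einstein metric of Nikonorov on $N_{1,0}=N^{130}$ is not diagonal with respect to the standard four-summand decomposition (because $\m_{1}$ and $\m_{4}$ are equivalent there), so it requires a separate second-derivative computation in a rotated decomposition of $\m_{1}\oplus\m_{4}$, as carried out in \S\ref{AWII}; your one-sentence deferral of the degenerate cases does not engage with this.
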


The proof of this theorem will be given in subsections \ref{AWI} and \ref{AWII}. Some remarks about the two
exceptional Aloff-Wallach spaces are also given in section \ref{homog}.

\medskip

Let $(M^{2m+1}, g), m \geq 2$ be a closed simply connected Sasakian Einstein manifold. The Sasaki structure
is {\em regular} if the characteristic vector field generates a free circle action on $M$. In this case,
$M$ is a principal circle bundle over a Fano K\"ahler Einstein manifold $B$ such that the projection map
is a Riemannian submersion with totally geodesic fibres, and the Euler class of the bundle is a rational
multiple of the first Chern class of $B$. It follows from Corollary 1.7 in \cite{WW18} that
if the second Betti number of $B$ is greater than $1$, then $(M, g)$ is $\widetilde{\bf S}$-linearly unstable.

When $b_2(B) = 1$, we have $H^2(B; \zz) \approx \zz$ (since $H^2(B; \zz)$ is torsion free), so all principal
circle bundles over it are, up to a change in orientation in the fibers, quotients of the circle bundle corresponding
to one of the two indivisible classes in $H^2(B; \zz)$. The total spaces of these two circle bundles are diffeomorphic
and simply connected. The simplest examples of Fano K\"ahler Einstein manifolds with $b_2 =1$ are the irreducible
hermitian symmetric spaces of compact type. For complex projective space ${\cc \pp}^m$, the corresponding simply connected
regular Sasaki Einstein manifold over it is just $S^{2m+1}$ equipped with the constant curvature $1$ metric, which
is $\widetilde{\bf S}$-linearly stable. By contrast we have

\begin{thm} The following simply connected regular Sasaki Einstein manifold are $\nu$-linearly unstable from
conformal variations:

\noindent{$($a$)$} $\SO(p+2)/\SO(p), p \geq 3,$   circle bundle over the complex quadric $ \SO(p+2)/(\SO(p) \times \SO(2))$;

\noindent{$($b$)$} ${\rm E}_6/ \Spin(10)$, and ${\rm E}_7/{\rm E}_6$, which are respectively circle bundles over the
     hermitian symmetric spaces ${\rm E}_6/(\Spin(10 \cdot \U(1))$ and ${\rm E}_7/({\rm E}_6 \cdot \U(1))$;

\noindent{$($c$)$} $\SU(p+2)/(\SU(p) \times \SU(2)), p \geq 2$, a circle bundle over the complex Grassmannian
      $\SU(p+2)/{\rm S}(\U(p)\times \U(2))$.

Moreover, the Stiefel manifolds in $($a$)$ above are also $\widetilde{\bf S}$-linearly unstable, and for $k \geq 4$, $\Sp(k)/\SU(k)$,
      which are circle bundles over $\Sp(k)/\U(k)$, are  $\widetilde{\bf S}$-linearly unstable, and so $\nu$-linearly unstable.
\end{thm}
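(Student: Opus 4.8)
The plan is to deal with (a), (b), (c) in a uniform way using the structure of $M$ as a regular Sasaki--Einstein circle bundle, and to treat the two ``moreover'' assertions by the transverse--traceless (TT) tensor technique already employed for the Aloff--Wallach spaces in subsections \ref{AWI}--\ref{AWII}. Throughout, write $\pi\colon M=M^{2m+1}\to B=B^{2m}$ for the Riemannian submersion exhibiting $M$ as a simply connected regular Sasaki--Einstein manifold over the Fano Kähler--Einstein base $B$, normalized so that $\ric_M=2m\,g_M$ and $\ric_B=(2m+2)\,g_B$; thus $2\Lambda=4m$. In each case in (a)--(c), $B$ is an irreducible Hermitian symmetric space of rank $\ge 2$ with $H^2(B;\zz)\cong\zz$ generated by an ample line bundle $L$, and the Fano index $q$ (defined by $c_1(B)=q\,c_1(L)$) equals $p$ for the quadric $\SO(p+2)/(\SO(p)\times\SO(2))$, $p+2$ for the Grassmannian $\SU(p+2)/{\rm S}(\U(p)\times\U(2))$, $12$ for ${\rm E}_6/(\Spin(10)\cdot\U(1))$, and $18$ for ${\rm E}_7/({\rm E}_6\cdot\U(1))$; moreover $M$ is the unit circle bundle of $L^{-1}$.

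The first main step is to produce a destabilizing conformal direction. Decompose $C^\infty(M;\cc)=\bigoplus_{n\in\zz}\mathcal H_n$ into weight spaces of the Reeb circle action; for $s\in H^0(B,L)$, pairing $s$ against the tautological section gives a function $\widehat s\in\mathcal H_1$, and the key claim is that $\widehat s$ is an eigenfunction of $\Delta_{g_M}$ with a small eigenvalue. The Reeb circle has length $\ell=2\pi q/(m+1)$ (equivalently, the fundamental vector field of the circle action has $g_M$-norm $q/(m+1)$), which one reads off from the Euler class of $M\to B$ being indivisible together with $[\,\rho_B\,]=2\pi\,c_1(B)=2\pi q\,c_1(L)$; hence the vertical Laplacian contributes $(2\pi/\ell)^2=(m+1)^2/q^2$ on $\mathcal H_1$. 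The horizontal part of $\Delta_{g_M}\widehat s$ is the connection Laplacian of the holomorphic section $s$ for the Chern connection of $L$, and the Bochner--Kodaira--Nakano identity over the Kähler--Einstein base reduces $\nabla^*\nabla s$ to multiplication by $q^{-1}$ times the $\omega_B$-trace of the Ricci form of $B$, namely $2m(m+1)/q$. Therefore
\[
  \lambda_1(M,g_M)\ \le\ \frac{2m(m+1)}{q}+\frac{(m+1)^2}{q^2}\ =\ \frac{m+1}{q}\left(\frac{m+1}{q}+2m\right).
\]
I would sanity-check every normalization against the model $S^{2m+1}\to\cc\pp^m$, where $q=m+1$, the right-hand side equals $2m+1$, and $\widehat s$ is a linear coordinate on $\cc^{m+1}$. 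It then remains to verify the elementary inequality $\frac{m+1}{q}\bigl(\frac{m+1}{q}+2m\bigr)<4m=2\Lambda$ case by case: it holds for the quadric precisely when $p\ge 3$ (and fails for $p=2$), for the Grassmannian for all $p\ge 2$, and for the two exceptional bases ($m=16,q=12$ resp. $m=27,q=18$) by direct arithmetic. Since each $(M,g_M)$ here is a non-round simply connected homogeneous Einstein manifold it admits no nontrivial conformal Killing field, so $\widehat s$ gives a genuine (non-gauge) conformal deformation; by the discussion in section \ref{instab-conf} and \cite{CH15}, $\lambda_1<2\Lambda$ yields a positive direction for the second variation of the $\nu$-entropy on ${\mathscr C}(M)g_M$, i.e. $\nu$-linear instability from conformal variations. (Here one must use the fibrewise non-constant $\widehat s$: pullbacks of eigenfunctions from the base have eigenvalue $\lambda_1(B)=4m+4>2\Lambda$, since $\lambda_1$ of an irreducible Hermitian symmetric space equals $2$ in the normalization $\ric=g$ by Matsushima's theorem — in keeping with the remark in the Introduction that destabilizing conformal directions are non-invariant.)

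For the two ``moreover'' statements the bound above is no longer below $2\Lambda$ — in fact $\frac{m+1}{q}\bigl(\frac{m+1}{q}+2m\bigr)\ge 4m$ for $\Sp(k)/\SU(k)$ once $k\ge 4$ — so instead I would produce a destabilizing TT-tensor. The key observation is that the space of $G$-invariant symmetric $2$-tensors on $M$ is small: for $M=\SO(p+2)/\SO(p)$ the isotropy module is $\rr^p\oplus\rr^p\oplus\rr$ (so the space is $4$-dimensional), while for $M=\Sp(k)/\SU(k)$ it is $\rr\oplus{\rm Sym}^2\cc^k\oplus\overline{{\rm Sym}^2\cc^k}$, so that every invariant metric is a canonical variation of $M\to B$. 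One then takes $h=\eta\otimes\eta-\tfrac{1}{2m+1}g_M$, where $\eta$ is the contact form; because the fibres are totally geodesic and the Reeb field is Killing, $\delta_{g_M}(\eta\otimes\eta)=0$ and $\tr_{g_M}(\eta\otimes\eta)=1$, so $h$ is TT. Computing ${\mathscr Q}_{g_M}(h,h)$ directly — equivalently, reading off the behaviour of scalar curvature along the canonical variation $g_t=g^{\rm hor}+t^2\,\eta\otimes\eta$ via the O'Neill formula — gives ${\mathscr Q}_{g_M}(h,h)<0$ for these bases, i.e. $\widetilde{\bf S}$-linear instability, hence $\nu$-linear instability. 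For $\SO(p+2)/\SO(p)$ one may alternatively diagonalize the Lichnerowicz Laplacian on the $4$-dimensional space of invariant symmetric $2$-tensors, exactly as in subsections \ref{AWI}--\ref{AWII}.

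The main obstacle I anticipate is the bookkeeping of constants rather than anything conceptual: the inequality in (a)--(c) is tight — it just fails for $Q^2$ and for $\Sp(3)/\SU(3)$ — so the Reeb-circle length $2\pi q/(m+1)$, the curvature term $2m(m+1)/q$, and the Fano indices $q$ must each be pinned down exactly, with the round sphere serving as the indispensable check. The secondary difficulty is to establish ${\mathscr Q}_{g_M}(h,h)<0$ along the canonical variation for $\SO(p+2)/\SO(p)$ and for $\Sp(k)/\SU(k)$ with $k\ge 4$; this is a finite computation of the same character as for the Aloff--Wallach spaces, and the constraint $k\ge 4$ ought to emerge precisely from where the relevant inequality among $\dim M$, $\dim B$ and the scalar curvature of $B$ turns in our favour.
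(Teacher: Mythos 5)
Your treatment of parts (a)--(c) is essentially correct, but it reaches the numbers by a genuinely different route from the paper. You build the destabilizing eigenfunction explicitly as $\widehat{s}$ for $s\in H^0(B,L)$ in the weight-one space of the Reeb action, and compute its eigenvalue from the Reeb-circle length plus a Bochner--Kodaira identity on the base; the paper instead identifies the relevant eigenvalue as the Casimir constant of a class-one irreducible representation of $G$ relative to $H$ (the vector representation of $\SO(m+2)$, $\Lambda^2$ of the vector representation of $\SU(p+2)$, the $27$ of ${\rm E}_6$, the $56$ of ${\rm E}_7$), corrected by the vertical term via the B\'erard Bergery--Bourguignon/Besson--Bordoni machinery for the canonical variation. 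By Borel--Weil these are two faces of the same computation ($H^0(B,L)$ \emph{is} the representation in question), and I checked that your formula $\frac{m+1}{q}\bigl(\frac{m+1}{q}+2m\bigr)$ reproduces the paper's eigenvalue $\lambda-\frac{m-1}{2m}\frac{\ell^2}{a}$ for the quadric after rescaling. Your version buys a self-contained geometric derivation and automatically settles which Fourier weight occurs; the paper's version buys uniformity (one only needs a Casimir constant below $2\Lambda$, after which the vertical correction can be ignored) and also yields the multiplicities of the destabilizing eigenspaces.

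The ``moreover'' part, however, contains a genuine error. The trace-free part of $\eta\otimes\eta$ is exactly the tangent direction to the canonical variation $g_t=t^2\hat g+\check g$, and for a circle bundle the fibre scalar curvature vanishes, so $\widetilde{\bf S}(g_t)=(t\,V)^{2/(2m+1)}(\check s-t^2|A|^2)$ has a unique critical point which is a local \emph{maximum}; hence ${\mathscr Q}(h,h)>0$ for your proposed $h$ and it is a \emph{stable} direction, not a destabilizing one. For the Stiefel manifolds this is repairable and your fallback is what the paper actually does: the space of invariant metrics is larger than the canonical variation (the two horizontal summands $\pf_1\cong\pf_2\cong\rr^p$ can be scaled independently), and the destabilizing direction found in section \ref{Stiefel} breaks the symmetry $x_1=x_2$. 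But for $\Sp(k)/\SU(k)$ the horizontal isotropy module is irreducible of complex type with a one-dimensional space of invariant symmetric forms, so (as you yourself note) every invariant metric lies in the canonical variation --- and therefore there is \emph{no} invariant destabilizing TT-tensor at all. No finite invariant computation can close this case. The paper's argument is entirely different: Koiso's work shows the base $\Sp(k)/\U(k)$ is itself $\widetilde{\bf S}$-linearly unstable, and Corollary 6.1 of \cite{Wan17} lifts a destabilizing tensor from the base to the Sasaki--Einstein circle bundle provided the negative eigenvalue of $\nabla^*\nabla-2\mathring{R}$ on the base is $<-8$ in the normalization $\Lambda_B=\dim B+2$; the computed eigenvalue $-2k-\frac{4}{k+1}$ satisfies this precisely when $k\geq4$, which is where that constraint really comes from --- not from any scalar-curvature inequality along the canonical variation.
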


The proof of this theorem, including the dimensions of the destablizing eigenspaces, are given in
sections \ref{Stiefel} and \ref{instab-conf}.

The $\widetilde{\bf S}$-linear instability of $\Sp(k)/\SU(k)$, $k\geq4$, follows from Koiso's work in \cite{Koi80} and Corollary 6.1 in \cite{Wan17}. Indeed, by Koiso's calculations on page 68 and the table on page 70 in \cite{Koi80}, the Hermitian symmetric space $\Sp(k)/\U(k)$ of dimension $k^{2}+k$ is $\widetilde{\bf S}$-linearly unstable. Moreover, after rescaling the symmetric metric used in \cite{Koi80} so that the
new Einstein constant is $k^2+k+2$,  one finds that $\nabla^{*}\nabla-2\mathring{R}$ has a negative eigenvalue $-4\frac{k^{2}+k+2}{2(k+1)}=-2k-\frac{4}{k+1}<-8$ if $k\geq4$.

\smallskip

In the last section of this paper we discuss the stability of compact simply connected homogeneous Einstein
manifolds of dimension $\leq 7$ by putting together the results in this paper and \cite{WW18}
with classification results for these manifolds, and the work of \cite{Koi80} and \cite{CH15}. The results are
summarized as follows:

\begin{thm}  \label{lowdim}
Let $(M=G/K, g)$ be a compact simply connected Einstein manifold on which the semisimple connected
Lie group $G$ acts almost effectively by isometries and with isotropy group $K$. Assume that $(G, K)$ is not
a Riemannian symmetric pair and that $5 \leq \dim M \leq 7$. Assume further that $M \neq S^3 \times S^3$
or the isotropy irreducible space $\Sp(2)/\SU(2)$. Then $g$ is $\widetilde{\bf S}$-linearly unstable.
\end{thm}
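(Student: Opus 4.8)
The plan is to reduce Theorem \ref{lowdim} to a finite case-by-case check by first invoking the classification of compact simply connected homogeneous Einstein manifolds $(G/K,g)$ with $G$ semisimple, $(G,K)$ non-symmetric, and $5 \le \dim M \le 7$. In each such dimension the list is known and short: in dimension $5$ one has the Wallach-type space $\SU(3)/T^2$ — wait, that is dimension $6$ — so more precisely, dimension $5$ gives essentially the Stiefel manifold $V_2(\rr^4)=\SO(4)/\SO(2)$ (which however is $S^3\times S^2$ as a homogeneous space of $\SO(4)$, non-symmetric) and $\SU(3)/\SO(3)$ is isotropy irreducible but symmetric-type excluded; dimension $6$ gives $\SU(3)/T^2$, $G_2/\SU(3)$, $\SO(5)/(\SO(3)\times\SO(2))$ — no, that is symmetric — and the product $S^3\times S^3 = (\SU(2)^2\times\SU(2)^2)/\Delta$ which is explicitly excluded; dimension $7$ gives the Aloff–Wallach spaces $N_{k,l}$, the Stiefel manifold $\SO(5)/\SO(3)=V_2(\rr^5)$, $\Sp(2)/\SU(2)$ (isotropy irreducible, excluded), $\SU(3)/T^2 \times \ldots$ — no — and $S^7$ realized non-symmetrically as $\Sp(2)/\Sp(1)$ with various metrics. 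I would assemble this list carefully from the references already cited (Nikonorov, Böhm–Kerr, and the surveys), being explicit about which spaces carry more than one invariant Einstein metric.

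Next, for each space on the list I would cite the instability statement already available. The Aloff–Wallach spaces $N_{k,l}$ are handled by Theorem \ref{Aloff-Wallach}. The Stiefel manifolds $V_2(\rr^{n})=\SO(n)/\SO(n-2)$ appearing in dimensions $5$ and $7$ (i.e.\ $\SO(4)/\SO(2)$ and $\SO(5)/\SO(3)$) are covered by part (a) of the preceding theorem on regular Sasaki Einstein circle bundles, where $\widetilde{\bf S}$-linear instability is asserted for $\SO(p+2)/\SO(p)$, $p\ge 3$; the low case $p=2$, if it arises, I would treat directly using the submersion $\SO(4)/\SO(2)\to \SO(4)/(\SO(2)\times\SO(2))$ and Corollary 1.7 in \cite{WW18}, noting $b_2$ of the base. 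The strict nearly Kähler $6$-manifolds other than $G_2/\SU(3)$ — namely $\SU(3)/T^2$, $\Sp(2)/(\SU(2)\cdot\U(1))$, and $S^3\times S^3$ — would be invoked from the nearly Kähler results in this paper; $S^3\times S^3$ and $G_2/\SU(3)$ are precisely the excluded/exceptional cases. The remaining non-symmetric metrics on $S^7$ and on products are handled by \cite{Koi80} together with Corollary 6.1 in \cite{Wan17}, exactly as in the $\Sp(k)/\SU(k)$ discussion in the excerpt.

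The key analytic mechanism throughout is the same: produce a TT-tensor $h$ with $\mathscr{Q}(h,h)<0$, which by Definition \ref{stability-def}(b) gives $\widetilde{\bf S}$-linear instability, and then by the implications recorded after Definition \ref{nu-stability-def} also $\nu$-linear instability and Ricci-flow dynamical instability. For the homogeneous spaces of Sasakian or nearly parallel $G_2$ type the destabilizing $h$ comes from deforming along the fibre/characteristic direction of the associated Riemannian submersion, and one computes $\mathring{R}h$ and $\nabla^*\nabla h$ explicitly using the canonical variation of the submersion metric (O'Neill tensors), or equivalently from the known second-variation formulas for Einstein metrics fibering over Kähler–Einstein bases.

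The main obstacle I anticipate is completeness and correctness of the classification input: making sure no non-symmetric homogeneous Einstein metric in dimensions $5,6,7$ is overlooked (including spaces admitting several invariant Einstein metrics, and spaces that are diffeomorphic to symmetric spaces but presented non-symmetrically), and matching each one to the correct instability source among Theorem \ref{Aloff-Wallach}, the Sasaki circle bundle theorem, the nearly Kähler results, \cite{WW18}, and \cite{Koi80}. A secondary technical point is the genuinely low-rank edge cases (such as $\SO(4)/\SO(2)$ with $p=2$) that fall outside the ranges $p\ge 3$, $k\ge 4$ for which the general theorems were stated; these need a separate, direct verification, but in each instance the base of the relevant circle bundle still has $b_2>1$ or the explicit eigenvalue computation still yields a negative eigenvalue of $\nabla^*\nabla - 2\mathring{R}$ below the threshold $2\Lambda$, so no genuinely new idea is required — only bookkeeping.
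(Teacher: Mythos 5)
Your overall strategy --- reduce to the known classifications of non-symmetric compact simply connected homogeneous Einstein manifolds in dimensions $5$, $6$, $7$ and then match each space to an instability result already proved --- is exactly the paper's, and you rightly identify the completeness of the case list as the main risk. The problem is that the concrete lists you write down omit precisely the cases that require the fibration results of \cite{WW18}, and since the entire proof \emph{is} the enumeration, these are genuine gaps rather than deferred bookkeeping. In dimension $5$ the classification of \cite{ADF96} contains, besides the Stiefel manifold $\SO(4)/\SO(2)$, the infinite family $(\SU(2)\times\SU(2))/U_{k,l}$ with $k,l$ coprime and not both $1$; the paper disposes of these because the unique invariant Einstein metric is of Riemannian submersion type over $S^2\times S^2$, so Corollary 1.3 of \cite{WW18} applies. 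In dimension $7$ Nikonorov's classification \cite{Nik04} also contains the circle bundles over $S^2\times S^2\times S^2$ and over $\cc\pp^2\times S^2$; these are again handled by \cite{WW18} (product base, resp.\ base with $b_2>1$) and appear nowhere in your enumeration. You do cite Corollary 1.7 of \cite{WW18} in passing for a low-rank Stiefel case, but you never connect that circle of ideas to the infinite families it is actually needed for.

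Two further corrections. The Jensen squashed $S^7$ is not reached by Koiso \cite{Koi80} together with Corollary 6.1 of \cite{Wan17} (those concern symmetric metrics such as $\Sp(k)/\U(k)$); the paper's argument is that the Jensen metric lies in the canonical variation of the Hopf fibration $S^3\to S^7\to S^4$ and, the round metric being stable, must be the destabilized critical point of the restricted functional --- an argument you do list in your toolkit but attach to the wrong cases. In dimension $6$, note that the classification is still open exactly for left-invariant metrics on $S^3\times S^3$ (this is why the theorem excludes it); the paper uses \cite{NR03} and \cite{BCHL18} to reduce everything else to the strict nearly K\"ahler spaces (Proposition \ref{NK-appl}) and the Ziller metric on $\cc\pp^3=\Sp(2)/(\Sp(1)\times\U(1))$, the latter again being the local minimum in a canonical variation over $S^4$. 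Your handling of $\SO(4)/\SO(2)$ via $b_2(S^2\times S^2)=2$ is correct and consistent with the paper, whose explicit Stiefel computation in section \ref{Stiefel} already covers $n=3$; but as written your proof would not establish the theorem for the two infinite families noted above.
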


\medskip

As mentioned at the beginning of the Introduction, it follows from all the above theorems and Corollary \ref{3-Sasakian}
that there are $\widetilde{\bf S}$-linearly unstable (and hence $\nu$-linearly unstable and dynamically unstable)
examples of manifolds admitting non-trivial real Killing spinors exhibiting all possible Euclidean metric cone special
holonomy types and in all admissible dimensions. By contrast, up to now, the only $\widetilde{\bf S}$-linearly stable
examples with non-trivial real Killing spinors are the constant curvature spheres. 

\medskip

\noindent{\bf Acknowledgements:}  The first author would like to thank Professors Xianzhe Dai and Guofang Wei for their interests and many helpful discussions. During the Fall term of 2017-2018, he was supported by a Fields Postdoctoral Fellowship. He thanks the Fields Institute for Research in Mathematical Sciences for the support.

The second author is partially supported by NSERC Discovery Grant No. OPG00009421.

Both authors thank Professors Stuart Hall and Thomas Murphy for their comments on an earlier version of the paper.

%%%%%%%%%%%%%%%%%%%%%%%%%%%%%%%%%%%%%%%%%%%%%%%%%%%%%%%%%%%%%%%%%%%%%%%%%%%%%%%%%%%%%%%%%%%%%

\section{\bf Instability of Einstein metrics on Aloff-Wallach spaces}  \label{AW}

In this section we will prove Theorem $\ref{Aloff-Wallach}$, i.e., deduce the $\widetilde{\bf S}$-linear instability of all invariant Einstein metrics on the Aloff-Wallach spaces $N_{k, l}=\SU(3)/U_{k,l}$, where $k, l$ are integers, and $U_{k,l}$ is the circle ${\rm diag}(e^{2\pi i k\theta}, e^{2\pi i l \theta}, e^{-2 \pi i (k+l) \theta})$ in $\SU(3)$. We will assume in addition that $k, l$ are
coprime, so that $N_{k, l}$ is simply connected, and remove diffeomorphic spaces by assuming that $k \geq l \geq 0$.
In the proof we will use the  explicit solutions in \cite{CR84} for the invariant Einstein equations on all Aloff-Wallach spaces except one invariant Einstein metric on $N_{1,0}$. Thus in \S2.1 and 2.2 we will follow the notation in \cite{CR84}, except that the parameters $\alpha, \beta, \gamma, \delta$ in $(\ref{InvariantMetric})$ below are $\frac{1}{\alpha^{2}}, \frac{1}{\beta^{2}}, \frac{1}{\gamma^{2}}, \frac{1}{\delta^{2}}$ in \cite{CR84}.

In \cite{CR84}, the Aloff-Wallach spaces $N_{k, l}$ are denoted instead by $N^{pq0}$ with co-prime integers $p, q$. The Lie algebra of the embedded circle subgroup is generated by $N$ as defined in (\ref{LieAlgebraBasis}) below. By comparing $N$ with the Lie algebra of $U_{k, l}$, one obtains the following relationship between $k, l$ and $p, q$:
\begin{equation}\label{pq-klRelation}
\begin{cases}
p=(k-l)c,\\
q=3(k+l)c,
\end{cases}
\end{equation}
for some proportionality constant $c$.

Our assumptions on $k, l$ translate into the conditions that $p, q\geq0, (p, q)=1,$ and $3p\leq q$.  Then the integer pairs $(k, l)$ and $(p, q)$ uniquely determine each other by $(\ref{pq-klRelation})$. In \cite{CR84}, more general spaces $N^{pqr}$ with integers $p, q$, and $r$ taken to be relatively prime were studied. These spaces have $N^{pq0}$ as their universal covers. We also note that the
special spaces $N_{1,1}, N_{1, 0}$ correspond respectively to $N^{010}$ and $N^{130}$. (These spaces are special because their isotropy
representations contain equivalent irreducible summands.)

%%%%%%%%%%%%%%%%%%%%%%%%%%%%%%%%%%%%%%%%%%%%%%%%%%%%%%%%%%%%%%%%%%%%%%%%%%%%%%%%%%%%%%%%%%%%%%%%%%%%%%%%%%%%%%%%%%%%%%%%%%%%%%%%%%%%%%%%%%%%%%%%%%%%

\subsection{Einstein metrics on $N^{pq0}$}
In this subsection, we will recall the Einstein equations on the Aloff-Wallach spaces and their solutions
in \cite{CR84}. They will play important roles in \S $\ref{AWI}$.

We use the following basis and the decomposition of the Lie algebra $\mathfrak{su}(3)$ as in \cite{CR84}.
For each fixed pair of integers $p, q$ with $(p, q)=1$, let
\begin{equation}\label{LieAlgebraBasis}
\begin{aligned}
& N = -\frac{i}{\sqrt{3p^{2}+q^{2}}}\begin{bmatrix}
                                      -\frac{\sqrt{3}}{6}(q-3p) & 0 & 0 \\
                                      0 & -\frac{\sqrt{3}}{6}(q+3p) & 0 \\
                                      0 & 0 & \frac{\sqrt{3}}{3}q
                                      \end{bmatrix}, \\ \\
& Z = -\frac{i}{\sqrt{3p^{2}+q^{2}}}\begin{bmatrix}
                                      \frac{p+q}{2} & 0 & 0 \\
                                      0 & \frac{p-q}{2} & 0 \\
                                      0 & 0 & -p
                                      \end{bmatrix},\\ \\
& X_{1}=-\frac{1}{2}i\lambda_{1}=\begin{bmatrix}
                                0 & -\frac{1}{2}i & 0\\
                                -\frac{1}{2}i & 0 & 0\\
                                0 & 0 & 0
                                \end{bmatrix},
                                \qquad
X_{2}=-\frac{1}{2}i\lambda_{2}=\begin{bmatrix}
                              0 & -\frac{1}{2} & 0\\
                              \frac{1}{2} & 0 & 0 \\
                              0 & 0 & 0
                              \end{bmatrix},\\ \\
& X_{4}=-\frac{1}{2}i\lambda_{4}=\begin{bmatrix}
                                0 & 0 & -\frac{1}{2}i\\
                                0 & 0 & 0\\
                                -\frac{1}{2}i & 0 & 0
                                \end{bmatrix},
\qquad
X_{5}=-\frac{1}{2}i\lambda_{5}=\begin{bmatrix}
                              0 & 0 & -\frac{1}{2}\\
                              0 & 0 & 0 \\
                              \frac{1}{2} & 0 & 0
                              \end{bmatrix},\\ \\
& X_{6}=-\frac{1}{2}i\lambda_{6}=\begin{bmatrix}
                                0 & 0 & 0\\
                                0 & 0 & -\frac{1}{2}i\\
                                0 & -\frac{1}{2}i & 0
                                \end{bmatrix},
\qquad
X_{7}=-\frac{1}{2}i\lambda_{7}=\begin{bmatrix}
                              0 & 0 & 0\\
                              0 & 0 & -\frac{1}{2}\\
                              0 & \frac{1}{2} & 0
                              \end{bmatrix},
\end{aligned}
\end{equation}
where $\lambda_{k}$ are called the Gell-Mann matrices in the physics literature.
Let $\mathfrak{h}={\rm span}(N)$, $\mathfrak{m}_{1}={\rm span}(X_{1}, X_{2})$, $\mathfrak{m}_{2}={\rm span}(Z)$,
$\mathfrak{m}_{3}={\rm span}(X_{4}, X_{5})$, and $\mathfrak{m}_{4}={\rm span}(X_{6}, X_{7})$. Then we
have the Lie algebra decomposition:
\begin{equation}\label{LieAlgDec}
\mathfrak{su}(3)=\mathfrak{t}\oplus\mathfrak{m}_{1}\oplus\mathfrak{m}_{3}\oplus\mathfrak{m}_{4}
=\mathfrak{h}\oplus\mathfrak{m}_{2}\oplus\mathfrak{m}_{1}\oplus\mathfrak{m}_{3}\oplus\mathfrak{m}_{4}=\mathfrak{h}\oplus\mathfrak{m},
\end{equation}
where $\mathfrak{t}=\mathfrak{h}\oplus\mathfrak{m}_{2}$, and $\mathfrak{m}=\mathfrak{m}_{1}\oplus\mathfrak{m}_{2}\oplus\mathfrak{m}_{3}\oplus\mathfrak{m}_{4}$. Let $H_{p,q} \approx \U(1)$ be the isotropy group (generated by $N$) of the identity coset $[I_{3}]$, where $I_{3}$ denotes the identity matrix of size $3$. We identify the tangent space $T_{[I_{3}]}N^{pq0}$ with $\mathfrak{m}$  as usual.

The background metric chosen in \cite{CR84} is the negative of the Killing form of $\mathfrak{su}(3)$:
$Q(X, Y)$ $=-6\tr(XY)$ for any $X, Y\in \mathfrak{su}(3)$. This can be seen from (2.7) and the first equation in (2.5)
of \cite{CR84}. Then for any four positive real numbers $\alpha, \beta, \gamma, \delta$, the following ${\rm Ad}_{H}$-invariant
inner product
\begin{equation}\label{InvariantMetric}
g(\alpha, \beta, \gamma, \delta)=\alpha Q\vert_{\mathfrak{m}_{1}}\oplus\beta Q\vert_{\mathfrak{m}_{2}}
\oplus\gamma Q\vert_{\mathfrak{m}_{3}}\oplus\delta Q\vert_{\mathfrak{m}_{4}}
\end{equation}
on $\mathfrak{m}$ induces an $\SU(3)$-invariant Riemannian metric on $N^{pq0}$. We will also use $g(\alpha, \beta, \gamma, \delta)$ to denote this invariant Riemannian metric. In the generic cases, namely when $(p, q)\neq (0, 1)$ or $(1, 3)$, the components $\m_{1}$, $\m_{2}$, $\m_{3}$, and $\m_{4}$ of the decomposition of the isotropy representation $\m$ are inequivalent to each other. Thus the inner products in $(\ref{InvariantMetric})$  induce all of the $\SU(3)$-invariant metrics on $N^{pq0}$. On the other hand, it turns out that all Einstein metrics on $N^{010}$ and $N^{130}$ obtained in \cite{CR84} and \cite{PP84} also have the block diagonal form as in $(\ref{InvariantMetric})$. Thus, as in \cite{CR84}, we first only consider invariant metrics defined as in $(\ref{InvariantMetric})$.
 We remind the reader that the $\alpha, \beta, \gamma, \delta$ in $(\ref{InvariantMetric})$ are actually $\frac{1}{\alpha^{2}}, \frac{1}{\beta^{2}}, \frac{1}{\gamma^{2}}, \frac{1}{\delta^{2}}$ in \cite{CR84}.

The Ricci tensors of the metrics in $(\ref{InvariantMetric})$ have the same block diagonal form as the metrics. Their
 components are given in (2.13) in \cite{CR84}, and we recall them below.
\begin{equation}\label{RicciCurvatureAloffWallach}
\begin{split}
Ric\vert_{\m_{1}} &= \left[\frac{3}{4}\frac{1}{\alpha}+\frac{1}{8}\left(\frac{\alpha}{\gamma\delta}-\frac{\gamma}{\alpha\delta}-\frac{\delta}{\alpha\gamma}\right)
-\frac{1}{4}q^{2}\frac{\beta}{\alpha^{2}}\right]g\vert_{\m_{1}},\\
Ric\vert_{\m_{2}} &= \left[\frac{1}{4}q^{2}\frac{\beta}{\alpha^{2}}+\frac{1}{16}(3p+q)^{2}\frac{\beta}{\gamma^{2}}+\frac{1}{16}(3p-q)^{2}\frac{\beta}{\delta^{2}}\right]
g\vert_{\m_{2}},\\
Ric\vert_{\m_{3}} &=\left[\frac{3}{4}\frac{1}{\gamma}+\frac{1}{8}\left(\frac{\gamma}{\alpha\delta}-\frac{\alpha}{\gamma\delta}-\frac{\delta}{\alpha\gamma}\right)
-\frac{1}{16}(3p+q)^{2}\frac{\beta}{\gamma^{2}}\right]g\vert_{\m_{3}},\\
Ric\vert_{\m_{4}}&=\left[\frac{3}{4}\frac{1}{\delta}+\frac{1}{8}\left(\frac{\delta}{\alpha\gamma}-\frac{\alpha}{\gamma\delta}
-\frac{\gamma}{\alpha\delta}\right)-\frac{1}{16}(3p-q)^{2}\frac{\beta}{\delta^{2}}\right]g\vert_{\m_{4}}.
\end{split}
\end{equation}

Recall also the following change of variables given in (3.3) in \cite{CR84}:
\begin{equation}\label{ChangeVariables}
a=\frac{\delta}{\alpha},\quad b=\frac{\gamma}{\alpha}, \quad u=\sqrt{\frac{\beta\delta}{\alpha\gamma}}\frac{(3p+q)}{\sqrt{2}}, \quad v=-\sqrt{\frac{\beta\gamma}{\alpha\delta}}\frac{(3p-q)}{\sqrt{2}}, \quad \lambda=96\frac{\gamma\delta e^{2}}{\alpha},
\end{equation}
where we have used our choice of $\alpha, \beta, \gamma, \delta$  as in $(\ref{InvariantMetric})$.  Using this change of variables, Castellani and Romans transformed the Einstein equations for the metric $g(\alpha, \beta, \gamma, \delta)$  with Einstein constant $12e^{2}$ to the equations
\begin{equation}\label{EinsteinCondition}
\begin{aligned}
6ab+1-a^{2}-b^{2}-(av+bu)^{2} &=& \lambda,\\
6a+b^{2}-a^{2}-1-v^{2} &=& \lambda,\\
6b+a^{2}-b^{2}-1-v^{2} &=& \lambda,\\
(av+bu)^{2}+u^{2}+v^{2} &=& \lambda.
\end{aligned}
\end{equation}
This is (3.2) in \cite{CR84}. They further obtain the following rather explicit solutions to equations $(\ref{EinsteinCondition})$:
\begin{equation}\label{Solution}
\begin{split}
a &= c+\frac{1}{2}d+\frac{3}{2} \qquad (-1\leq c\leq 1),\cr
b &= c-\frac{1}{2}d+\frac{3}{2},\cr
u^{2} &= \frac{5}{2}-2(c+\frac{1}{2}d)^{2}, \qquad uv=-2+\frac{5}{2}c^{2},\cr
v^{2} &= \frac{5}{2}-2(c-\frac{1}{2}d)^{2},\cr
\lambda &= \frac{3}{2}(c+2)^{2},
\end{split}
\end{equation}
where $d=\pm\sqrt{1-c^{2}}$ and $c$ is related to $p$ and $q$ by
\begin{equation}\label{EquationForc}
\frac{3p}{q}=\frac{1-\frac{av}{bu}}{1+\frac{av}{bu}}.
\end{equation}
These are the equations  (3.4) and (3.5) in \cite{CR84}.

For each pair of co-prime non-negative integers $p, q$ with $3p\leq q$, a solution of $(\ref{EquationForc})$ satisfying $-1\leq c\leq -\frac{2}{\sqrt{5}}$ with $d=\sqrt{1-c^{2}}$ and the corresponding Einstein metric was obtained \cite{CR84}. Then in \cite{PP84}, Page and Pope showed that for each pair of such integers $p, q$, another solution of $(\ref{EquationForc})$ satisfying $\frac{2}{\sqrt{5}}\leq c\leq 1$ with $d=-\sqrt{1-c^{2}}$ actually gives a geometrically inequivalent Einstein metric, and furthermore there are exactly two geometrically inequivalent Einstein metrics on each $N^{pq0}$ among metrics of the form (\ref{InvariantMetric}). Their $\widetilde{\bf S}$-linear instability will be shown in \S2.2. However, in \cite{Nik04}, Nikonorov pointed out that the two Einstein metrics on $N^{130}$ obtained in \cite{CR84} and \cite{PP84} are isometric to each other. Moreover, he found a geometrically inequivalent invariant Einstein metric on $N^{130}$ and showed that there are exactly two geometrically inequivalent invariant Einstein metrics on $N^{130}$. The $\widetilde{\bf S}$-linear instability of the additional invariant Einstein metric will be shown in \S2.3.

%%%%%%%%%%%%%%%%%%%%%%%%%%%%%%%%%%%%%%%%%%%%%%%%%%%%%%%%%%%%%%%%%%%%%%%%%%%%%%%%%%%%%%%%%%%%%%%%%%%%%%%%%%%%%%%%%%%%%%%%%%%%%%%%%%%%%%%%%%%%%%%%%%%%%%%

\subsection{Instability of invariant Einstein metrics in \cite{CR84} and \cite{PP84}}   \label{AWI}

By using the Ricci curvature formulas (\ref{RicciCurvatureAloffWallach}), one easily obtains the scalar curvature of $g(\alpha, \beta, \gamma, \delta)$ as
\begin{equation*}
\begin{aligned}
s_{g(\alpha, \beta, \gamma, \delta)}=
&\frac{3}{2}\left(\frac{1}{\alpha}+\frac{1}{\gamma}+\frac{1}{\delta}\right)-\frac{1}{4}\left(\frac{\alpha}{\gamma\delta}
+\frac{\gamma}{\alpha\delta}+\frac{\delta}{\alpha\gamma}\right)\\
&-\frac{1}{4}q^{2}\frac{\beta}{\alpha^{2}}
-\frac{1}{16}(3p+q)^{2}\frac{\beta}{\gamma^{2}}-\frac{1}{16}(3p-q)^{2}\frac{\beta}{\delta^{2}}.
\end{aligned}
\end{equation*}
The volume of $(N^{pq0}, g(\alpha, \beta, \gamma, \delta))$, denoted by
$\Vol(g(\alpha, \beta, \gamma, \delta))$, is equal to $\Vol(Q)\alpha\beta^{\frac{1}{2}}\gamma\delta,$
where $\Vol(Q)$ denotes the volume of the space $N^{pq0}$ with the metric induced by $Q$. Then the normalized total scalar curvature of a metric $g(\alpha, \beta, \gamma, \delta)$ in $(\ref{InvariantMetric})$ is given by
\begin{equation*}
\begin{aligned}
\widetilde{\bf S}(g(\alpha, \beta, \gamma, \delta))
&=(\Vol(g(\alpha, \beta, \gamma, \delta)))^{\frac{2}{7}}s_{g(\alpha, \beta, \gamma, \delta)}\\
&=(\Vol(Q))^{\frac{2}{7}}(\alpha\beta^{\frac{1}{2}}\gamma\delta)^{\frac{2}{7}}\bigg[\frac{3}{2}\left(\frac{1}{\alpha}+\frac{1}{\gamma}+\frac{1}{\delta}\right)-\frac{1}{4}\left(\frac{\alpha}{\gamma\delta}
+\frac{\gamma}{\alpha\delta}+\frac{\delta}{\alpha\gamma}\right)\\
&\quad -\frac{1}{4}q^{2}\frac{\beta}{\alpha^{2}}
-\frac{1}{16}(3p+q)^{2}\frac{\beta}{\gamma^{2}}-\frac{1}{16}(3p-q)^{2}\frac{\beta}{\delta^{2}}\bigg]
\end{aligned}
\end{equation*}

By straightforward calculations, one has the following partial derivatives
\begin{equation*}
\begin{split}
\frac{\partial}{\partial\gamma}\widetilde{\bf S}(g(\alpha, \beta, \gamma, \delta)) &= \Vol(Q)^{\frac{2}{7}}\frac{1}{56}
(\alpha\beta^{\frac{1}{2}}\gamma\delta)^{\frac{2}{7}}\frac{1}{\alpha^{3}\gamma^{3}\delta^{3}}F_{3}(\alpha, \beta, \gamma, \delta),\\
\frac{\partial}{\partial\delta}\widetilde{\bf S}(g(\alpha, \beta, \gamma, \delta)) &= \Vol(Q)^{\frac{2}{7}}\frac{1}{56}
(\alpha\beta^{\frac{1}{2}}\gamma\delta)^{\frac{2}{7}}\frac{1}{\alpha^{3}\gamma^{3}\delta^{3}}F_{4}(\alpha, \beta, \gamma, \delta),\\
\end{split}
\end{equation*}
where
%\begin{equation}
%\begin{split}
%F_{1}(\alpha, \beta, \gamma, \delta)=
%&-60\alpha\gamma^{3}\delta^{3}+24(\gamma^{2}\delta^{3}+\gamma^{3}\delta^{2})\alpha^{2}-18\alpha^{3}\gamma^{2}\delta^{2}\\
%&+10\alpha\gamma^{4}\delta^{2}+10\alpha\gamma^{2}\delta^{4}+24q^{2}\beta\gamma^{3}\delta^{3}\\
%&-(3p+q)^{2}\alpha^{2}\beta\gamma\delta^{3}
%-(3p-q)^{2}\alpha^{2}\beta\gamma^{3}\delta,
%\end{split}
%\end{equation}

%\begin{equation}
%\begin{split}
%F_{2}(\alpha, \beta, \gamma, \delta)=
%&12(\alpha^{2}\gamma^{3}\delta^{3}+\alpha^{3}\gamma^{2}\delta^{3}+\alpha^{3}\gamma^{3}\delta^{2})
%-2(\alpha^{4}\gamma^{2}\delta^{2}+\alpha^{2}\gamma^{4}\delta^{2})\\
%&-16q^{2}\alpha\beta\gamma^{3}\delta^{3}+4(3p+q)^{2}\alpha^{3}\beta\gamma\delta^{3}-4(3p-q)^{2}\alpha^{3}\beta\gamma^{3}\delta,
%\end{split}
%\end{equation}

\begin{equation}\label{F3}
\begin{split}
F_{3}(\alpha, \beta, \gamma, \delta)=
&-60\alpha^{3}\gamma\delta^{3}+24(\alpha^{2}\delta^{3}+\alpha^{3}\delta^{2})\gamma^{2}+10\alpha^{4}\gamma\delta^{2}\\
&-18\alpha^{2}\gamma^{3}\delta^{2}+10\alpha^{2}\gamma\delta^{4}-4q^{2}\alpha\beta\gamma^{2}\delta^{3}\\
&+6(3p+q)^{2}\alpha^{3}\beta\delta^{3}-(3p-q)^{2}\alpha^{3}\beta\gamma^{2}\delta,
\end{split}
\end{equation}

\begin{equation}\label{F4}
\begin{split}
F_{4}(\alpha, \beta, \gamma, \delta)=
&-60\alpha^{3}\gamma^{3}\delta+24(\alpha^{2}\gamma^{3}+\alpha^{3}\gamma^{2})\delta^{2}+10\alpha^{4}\gamma^{2}\delta\\
&+10\alpha^{2}\gamma^{4}\delta-18\alpha^{2}\gamma^{2}\delta^{3}-4q^{2}\alpha\beta\gamma^{3}\delta^{2}\\
&-(3p+q)^{2}\alpha^{3}\beta\gamma\delta^{2}+6(3p-q)^{2}\alpha^{3}\beta\gamma^{3}.
\end{split}
\end{equation}

Let $g(\alpha_{0}, \beta_{0}, \gamma_{0}, \delta_{0})$ be a fixed but arbitrary invariant Einstein metric as in \cite{CR84} and \cite{PP84}. Then we investigate the stability of this invariant Einstein metric by varying the components of the  metric in $\mathfrak{m}_{3}\oplus\mathfrak{m}_{4}$. This keeps the variations  within the class of homogeneous metrics.
Accordingly consider the function
\begin{equation}
\widetilde{S}(t):= \widetilde{\bf S}(g(\alpha_{0}, \beta_{0}, \gamma_{0}+At, \delta_{0}+Bt)), \qquad t\geq0,
\end{equation}
where $A$ and $B$ are parameters.

\begin{prop}\label{CR-Instability}
There exist parameters $A$ and $B$ {\rm (}depending on $\alpha_{0}, \beta_{0}, \gamma_{0},$ and $\delta_{0}${\rm )} such that
\begin{equation}
\frac{d^{2}}{dt^{2}}\widetilde{S}(0)>0.
\end{equation}
\end{prop}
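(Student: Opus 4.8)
The plan is to reduce the question to a one-variable calculus computation that depends only on the concrete numbers produced by the Castellani--Romans/Page--Pope solution. Since $g(\alpha_0,\beta_0,\gamma_0,\delta_0)$ is a critical point of $\widetilde{\bf S}$ restricted to the space of unit-volume invariant metrics, the first derivative of $\widetilde S(t)$ at $t=0$ vanishes \emph{provided} the variation $(\dot\gamma,\dot\delta)=(A,B)$ is volume-preserving to first order; from $\Vol \propto \alpha\beta^{1/2}\gamma\delta$ this is the linear constraint $A/\gamma_0 + B/\delta_0 = 0$. So first I would choose $A=\gamma_0,\ B=-\delta_0$ (or any nonzero multiple), which kills the $O(t)$ term automatically and lets me read off $\frac{d^2}{dt^2}\widetilde S(0)$ as a genuine second variation rather than a generic second derivative.

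Next I would compute $\frac{d^2}{dt^2}\widetilde S(0)$ explicitly. One route: differentiate the first-order expressions for $\partial_\gamma\widetilde{\bf S}$ and $\partial_\delta\widetilde{\bf S}$ (already given in the excerpt in terms of $F_3$ and $F_4$) once more in the $\gamma$ and $\delta$ directions, evaluate the resulting Hessian entries $\widetilde{\bf S}_{\gamma\gamma}$, $\widetilde{\bf S}_{\gamma\delta}$, $\widetilde{\bf S}_{\delta\delta}$ at the Einstein point, and form $A^2\widetilde{\bf S}_{\gamma\gamma} + 2AB\,\widetilde{\bf S}_{\gamma\delta} + B^2\widetilde{\bf S}_{\delta\delta}$. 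Because the Einstein equations have been solved rationally in terms of the single parameter $c$ via the change of variables $(\ref{ChangeVariables})$ and the solution $(\ref{Solution})$, everything can be pushed through so that the sign of $\frac{d^2}{dt^2}\widetilde S(0)$ becomes the sign of an explicit rational (or algebraic, in $d=\pm\sqrt{1-c^2}$) function of $c$ on the relevant range. I would then check that this function is positive on $-1\le c\le -2/\sqrt5$ (the Castellani--Romans branch) and on $2/\sqrt5\le c\le 1$ (the Page--Pope branch), handling the endpoint/degenerate cases ($c=\pm1$, where $d=0$, and $N_{1,1}$, $N_{1,0}$) separately.

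Alternatively — and this is probably cleaner for the write-up — I would exploit the geometric meaning: varying only the $\m_3$ and $\m_4$ components while fixing $\alpha_0,\beta_0$ and the volume is exactly a TT-variation in the homogeneous setting (the trace-free, divergence-free conditions for diagonal left-invariant tensors on $N^{pq0}$ reduce to the single linear volume constraint, since distinct $\m_i$ are inequivalent in the generic case), so $\frac{d^2}{dt^2}\widetilde S(0)>0$ is literally the assertion that $\mathscr Q(h,h)<0$ for the corresponding TT-tensor $h$. I can therefore present the computation as evaluating $\langle \nabla^*\nabla h - 2\mathring R h,\,h\rangle$ directly, using $(\ref{2ndvar-Hilbert})$ together with the Ricci formulas $(\ref{RicciCurvatureAloffWallach})$; the curvature operator $\mathring R$ on an irreducible summand is controlled by the structure constants of $\mathfrak{su}(3)$ in the basis $(\ref{LieAlgebraBasis})$, which are elementary.

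The main obstacle I anticipate is purely bookkeeping: verifying positivity \emph{uniformly} over the two $c$-intervals and over all admissible $(p,q)$ (equivalently, confirming that the relation $(\ref{EquationForc})$ between $c$ and $p/q$ does not conspire with the other parameters to make the quadratic form degenerate or sign-indefinite somewhere). I expect the derivative expressions to simplify dramatically once $(\ref{Solution})$ is substituted — the identities $u^2+v^2+(av+bu)^2=\lambda$ and the linear relations among $a,b,c,d$ should collapse most terms — leaving a low-degree polynomial in $c$ whose positivity on $[-1,-2/\sqrt5]\cup[2/\sqrt5,1]$ is checked by inspection of its roots. The genuinely special spaces $N^{010}$ and $N^{130}$, where the isotropy representation has repeated summands, will need the ad hoc treatment promised for \S2.3, since there the clean "distinct summands" argument for the TT-reduction does not apply verbatim.
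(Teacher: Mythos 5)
Your overall computational strategy --- differentiate $\widetilde{\bf S}$ twice in the $\gamma,\delta$ parameters, substitute the Castellani--Romans change of variables $(\ref{ChangeVariables})$ and the explicit solution $(\ref{Solution})$, and reduce positivity to a polynomial inequality in $c$ on $[-1,-2/\sqrt{5}]\cup[2/\sqrt{5},1]$ --- is exactly the paper's. But there are two problems with the way you set it up. First, a conceptual slip: the first derivative of $\widetilde S(t)$ at $t=0$ vanishes for \emph{every} $(A,B)$, not only volume-preserving ones, because the Einstein metric is a critical point of the scale-invariant functional $\widetilde{\bf S}$ in all directions of the four-parameter family; this is precisely the statement $F_{3}(\alpha_0,\beta_0,\gamma_0,\delta_0)=F_{4}(\alpha_0,\beta_0,\gamma_0,\delta_0)=0$. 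The constraint $A/\gamma_0+B/\delta_0=0$ is therefore unnecessary, and imposing it is where the real gap enters.

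Second, the genuine gap: by committing in advance to $(A,B)=(\gamma_0,-\delta_0)$ you are attempting to prove that one \emph{particular} direction is destabilizing, whereas the proposition only asserts that \emph{some} $(A,B)$ works. The function $(A,B)\mapsto \frac{d^{2}}{dt^{2}}\widetilde S(0)$ is a binary quadratic form with coefficients $\partial_\gamma F_3$, $\partial_\delta F_3+\partial_\gamma F_4$, $\partial_\delta F_4$, and the paper establishes the existence of a positive direction by showing this form is indefinite: its discriminant reduces, after substituting $(\ref{ChangeVariables})$, $(\ref{EinsteinCondition})$ and $(\ref{Solution})$, to $32\alpha_0^8\gamma_0^2\delta_0^2 f(c)$ with $f(c)=-392c^{4}-273c^{3}+812c^{2}+840c+168>0$ on the relevant $c$-intervals. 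Indefiniteness guarantees a positive cone but gives no control over where it sits in the $(A,B)$-plane; your trace-free line could perfectly well land in the negative or null cone (note that the trace part of a general $h=AQ|_{\m_3}\oplus BQ|_{\m_4}$ contributes nothing to $\widetilde{\bf S}''$ by homothety invariance, so restricting to the trace-free slice genuinely discards one dimension of candidate TT-directions rather than normalizing away something irrelevant). So you must either verify positivity for your specific direction --- an equally long computation that may simply fail for some $c$ --- or compute the discriminant as the paper does. Two smaller points: the proposition as stated already covers the block-diagonal CR/PP metrics on $N^{010}$ and $N^{130}$ (the separate treatment in \S2.3 concerns only Nikonorov's additional metric), and Lemma $\ref{divergencefree}$ does not require the summands $\m_i$ to be irreducible or inequivalent, so no ad hoc argument is needed here for the special spaces.
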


\begin{proof}
Since
\begin{equation*}
\begin{aligned}
\frac{d}{dt}\widetilde{S}(t)
&=A\, \frac{\partial\widetilde{\bf S}}{\partial\gamma}(\alpha_{0}, \beta_{0}, \gamma_{0}+At, \delta_{0}+Bt)+B\, \frac{\partial\widetilde{\bf S}}{\partial\delta}(\alpha_{0}, \beta_{0}, \gamma_{0}+At, \delta_{0}+Bt)\\
&=\frac{(\Vol(Q))^{\frac{2}{7}}(\alpha_{0}\beta_{0}^{\frac{1}{2}}(\gamma_{0}+At)(\delta_{0}+Bt))^{\frac{2}{7}}}{56\alpha_{0}^{3}(\gamma_{0}+At)^{3}(\delta_{0}+Bt)^{3}}
\,[A F_{3}(\alpha_{0}, \beta_{0}, \gamma_{0}+At, \delta_{0}+Bt)\\
&\quad +B F_{4}(\alpha_{0}, \beta_{0}, \gamma_{0}+At, \delta_{0}+Bt)],
\end{aligned}
\end{equation*}
and $F_{3}(\alpha_{0}, \beta_{0}, \gamma_{0}, \delta_{0})=F_{4}(\alpha_{0}, \beta_{0}, \gamma_{0}, \delta_{0})=0$, we have
\begin{equation*}
\begin{aligned}
\frac{d^{2}}{dt^{2}}\widetilde{S}(0)=
&\,\frac{(\Vol(Q))^{\frac{2}{7}}(\alpha_{0}\beta_{0}^{\frac{1}{2}}\gamma_{0}\delta_{0})^{\frac{2}{7}}}{56\alpha_{0}^{3}\gamma_{0}^{3}\delta_{0}^{3}}
\bigg[A^{2}\frac{\partial F_{3}}{\partial \gamma}(\alpha_{0}, \beta_{0}, \gamma_{0}, \delta_{0})\\
& +AB(\frac{\partial F_{3}}{\partial \delta}+\frac{\partial F_{4}}{\partial \gamma})(\alpha_{0}, \beta_{0}, \gamma_{0}, \delta_{0})
+B^{2}\frac{\partial F_{4}}{\partial \delta}(\alpha_{0}, \beta_{0}, \gamma_{0}, \delta_{0})\bigg].
\end{aligned}
\end{equation*}
Thus we only need to show that there exist $A$ and $B$ such that
$$A^{2}\frac{\partial F_{3}}{\partial \gamma}(\alpha_{0}, \beta_{0}, \gamma_{0}, \delta_{0})+AB(\frac{\partial F_{3}}{\partial \delta}+\frac{\partial F_{4}}{\partial \gamma})(\alpha_{0}, \beta_{0}, \gamma_{0}, \delta_{0})+B^{2}\frac{\partial F_{4}}{\partial \delta}(\alpha_{0}, \beta_{0}, \gamma_{0}, \delta_{0})>0.$$
For this it suffices to show that
\begin{equation}
\left[\left(\frac{\partial F_{3}}{\partial \delta}+\frac{\partial F_{4}}{\partial \gamma}\right)^{2}-4\left(\frac{\partial F_{3}}{\partial \gamma}\right)\left(\frac{\partial F_{4}}{\partial \delta}\right)\right](\alpha_{0}, \beta_{0}, \gamma_{0}, \delta_{0})>0.
\end{equation}

For the solution $(\alpha_{0}, \beta_{0}, \gamma_{0}, \delta_{0})$, from the equations in $(\ref{ChangeVariables})$, one can easily deduce that
\begin{equation}\label{ChangeVariables1}
q^{2}\beta_{0}=\frac{(av+bu)^{2}\alpha^{3}_{0}}{2\gamma_{0}\delta_{0}}, \quad (3p+q)^{2}\beta_{0}=\frac{2u^{2}\alpha_{0}\gamma_{0}}{\delta_{0}}, \quad (3p-q)^{2}\beta_{0}=\frac{2v^{2}\alpha_{0}\delta_{0}}{\gamma_{0}}.
\end{equation}
Then by substituting the first two equations in $(\ref{ChangeVariables})$, the last equation in $(\ref{EinsteinCondition})$, and equations in $(\ref{ChangeVariables1})$ into the partial derivatives of the functions $F_{3}$ and $F_{4}$ defined in $(\ref{F3})$ and $(\ref{F4})$, we obtain
\begin{align*}
&\quad\left[\left(\frac{\partial F_{3}}{\partial \delta}+\frac{\partial F_{4}}{\partial \gamma}\right)^{2}-4\left(\frac{\partial F_{3}}{\partial \gamma}\right)\left(\frac{\partial F_{4}}{\partial \delta}\right)\right](\alpha_{0}, \beta_{0}, \gamma_{0}, \delta_{0})\\
&=\alpha^{8}_{0}\gamma^{2}_{0}\delta^{2}_{0}[(-132b+144ba-132a+40+4b^{2}+4a^{2}-12\lambda+46u^{2}+46v^{2})^{2}\\
&\quad-4(-60a+48ab+48b+10-54b^{2}+10a^{2}-4\lambda+4u^{2})\cdot\\
&\quad(-60b+48ab+48a+10+10b^{2}-54a^{2}-4\lambda+4v^{2})]\\
&=32\alpha^{8}_{0}\gamma^{2}_{0}\delta^{2}_{0}(-392c^{4}-273c^{3}+812c^{2}+840c+168)\\
&=32\alpha^{8}_{0}\gamma^{2}_{0}\delta^{2}_{0}f(c)
\end{align*}
where
$$f(c):=-392c^{4}-273c^{3}+812c^{2}+840c+168.$$
In the second last step above, we have used the equations in $(\ref{Solution})$ and $d=\pm\sqrt{1-c^{2}}$. Since for all invariant Einstein metrics in \cite{CR84} and \cite{PP84} the parameter $c\in \big[-1, \frac{2}{\sqrt{5}}\big]\cup\big[\frac{2}{\sqrt{5}}, 1\big]$, in order to complete the proof, we only need to show that
$f(c)>0$ for such $c$.

By simple calculations, one can see that
$$f^{\prime\prime}(c)<0, \quad \text{for} -1\leq c\leq -0.85 \ \ \text{or} \ \ 0.85\leq c\leq 1.$$
It follows that
$$f^{\prime}(c)\leq f^{\prime}(-1)=-35<0 \quad \text{for} \ \ -1\leq c\leq -0.85,$$
and
$$f^{\prime}(c)\geq f^{\prime}(1)=77>0  \quad \text{for} \ \  0.85\leq c\leq 1.$$
Thus
$$f(c)\geq f(-0.85)>3>0  \quad \text{for} \ \ -1\leq c\leq -0.85,$$
and
$$f(c)\geq f(0.85)>1096>0  \quad \text{for} \ \ 0.85\leq c\leq 1.$$
In particular,
$$f(c)=-392c^{4}-273c^{3}+812c^{2}+840c+168>0, \quad \text{for} -1\leq c\leq -\frac{2}{\sqrt{5}} \ \ \text{or} \ \ \frac{2}{\sqrt{5}}\leq c\leq 1.$$
This completes the proof.
\end{proof}

Next we shall show that the invariant variations used in Proposition \ref{CR-Instability} above are actually divergence-free. Then
the $\widetilde{\bf S}$-linear instability of the invariant Einstein metrics follows immediately from Proposition $\ref{CR-Instability}$.

\begin{lem}  \label{divergencefree}
Let $(G/K, g)$ be a $G$-homogeneous Riemannian manifold of dimension $n$ with $G$ compact and $K \subset G$ closed. Let $Q$ be a fixed
bi-invariant metric on $G$ and use it to write $\g = \kf \perp \m$. Suppose that
\begin{equation}  \label{m-decomposition}
 \m = \m_1 \oplus \cdots \oplus \m_r
\end{equation}
is a $Q$-orthogonal decomposition of $\m$ into ${\rm Ad}(K)$-invariant summands.
Finally suppose that $g$ and $G$-invariant symmetric $2$-tensor $h$ are given by
\begin{eqnarray*}
        g  & = &  a_1 Q| \m_1 \oplus \cdots \oplus a_r Q | \m_r, \,\,\,  a_i > 0  \\
        h  & = &  c_1 Q| \m_1 \oplus \cdots \oplus c_r Q | \m_r, \,\,\,  c_i \in \rr.
\end{eqnarray*}
Then $\delta_g h = 0$.
\end{lem}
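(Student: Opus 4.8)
The plan is to compute the divergence $\delta_g h$ directly from the standard formula for the covariant derivative of $G$-invariant tensors on a homogeneous space, evaluated at the base point $o = eK$. Recall that for a $G$-invariant symmetric $2$-tensor $h$ the divergence is $(\delta_g h)(X) = -\sum_i (\nabla_{E_i} h)(E_i, X)$ for a $g$-orthonormal basis $\{E_i\}$ of $\m \cong T_o(G/K)$, and that $(\nabla_{E_i} h)(E_i, X) = E_i(h(E_i,X)) - h(\nabla_{E_i} E_i, X) - h(E_i, \nabla_{E_i} X)$. Since $h$ and $g$ are both $G$-invariant and block-diagonal with respect to the fixed $Q$-orthogonal decomposition $\m = \m_1 \oplus \cdots \oplus \m_r$, the function $h(E_i, X)$ is constant in the relevant sense, so the first term drops out and only the terms involving the Levi-Civita connection survive. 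Thus I expect $\delta_g h$ to be expressible purely in terms of the bracket structure of $\g$ together with the eigenvalue data $a_i, c_i$.

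The key computational input is the Koszul-type formula for the invariant Levi-Civita connection: for $X, Y \in \m$, the $\m$-component of $\nabla_X Y$ is $\frac{1}{2}[X,Y]_\m + U(X,Y)$, where the symmetric part $U$ is determined by $2\, g(U(X,Y), Z) = g([Z,X]_\m, Y) + g(X, [Z,Y]_\m)$ for all $Z \in \m$. Here is where $\Ad(K)$-invariance of each $\m_j$ and the bi-invariance of $Q$ matter: bi-invariance gives $Q([Z,X],Y) = -Q(X,[Z,Y])$, i.e. $\ad(Z)$ is $Q$-skew, and $\Ad(K)$-invariance of the summands ensures the decomposition interacts well with brackets by elements of $\kf$. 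The strategy is to expand $\delta_g h$ in terms of these brackets, collecting the contribution of each ordered pair of summands $(\m_j, \m_k)$, and to show that each such contribution vanishes. Concretely, after substituting $g = \bigoplus a_i Q|\m_i$ and $h = \bigoplus c_i Q|\m_i$, the coefficient of a given bracket term will be a rational expression in the $a_i, c_i$ that one shows is antisymmetric under interchanging the roles of two indices summed over a $Q$-orthonormal (not $g$-orthonormal) basis, and hence sums to zero; the essential cancellation is the classical fact that $\sum_{i,j} Q([e_i, e_j]_{\m_k}, \cdot)$-type sums over $Q$-orthonormal bases vanish because $\ad$ is $Q$-skew-symmetric, so the "same-type" contributions cancel in pairs and the "mixed-type" contributions cancel against each other.

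The main obstacle I anticipate is purely organizational: keeping track of which block each of $E_i$, $X$, and the intermediate vectors $\nabla_{E_i}E_i$ and $\nabla_{E_i}X$ lands in, and verifying that the weighted bracket sums genuinely cancel rather than merely appearing to. It is tempting to invoke a slicker argument — e.g. that $h$ is a limit of tangent vectors to curves of $G$-invariant metrics $g_t = g + th$, each of which has $\delta_{g_t}$-something, or that $\delta_g h$ is itself a $G$-invariant $1$-form on $G/K$ and hence corresponds to an $\Ad(K)$-fixed vector in $\m$, which already forces it into a small space — but to actually conclude vanishing one still seems to need the skew-symmetry of $\ad$ under $Q$ to kill the bracket sums. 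I would therefore carry out the direct computation at $o$, isolate the one algebraic identity ($\sum_i Q([E_i, E_i]_\m, \cdot) = 0$ and its relatives, following from $Q$-skewness of $\ad(\cdot)$), and apply it blockwise to finish.
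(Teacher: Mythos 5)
Your plan is essentially the paper's own proof: both compute $\delta_g h$ at the base point, reduce everything to Lie-bracket sums over a $Q$-orthonormal basis adapted to the decomposition, and kill those sums using the $Q$-skew-symmetry of $\ad$ coming from bi-invariance. The only cosmetic difference is that the paper extends the orthonormal frame to Killing vector fields and invokes Besse's Lemma 7.27, whereas you package the same connection data via the Koszul formula and the $U$-tensor; the decisive identity $\sum_\alpha Q([X,e_\alpha],e_\alpha)=0$ is identical in both.
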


\begin{proof} We identify $\mathfrak{m}$ with the tangent space of $G/H$ at $[H]$ as usual.
 Let $\{X_{1}, \cdots, X_{n}\}\subset\mathfrak{m}$ be a orthonormal basis with respect to $g$, and extend them to Killing vector fields
 in a neighborhood of the base point $[H]$. Then we have at $[H]$
\begin{align*}
(\delta_{g}h)(X_{j}) & =-\sum^{n}_{i=1}(\nabla_{X_{i}}h)(X_{i}, X_{j})\\
                     & =-\sum^{n}_{i=1}\big(X_{i}(h(X_{i}, X_{j}))-h(\nabla_{X_{i}}X_{i}, X_{j})-h(X_{i}, \nabla_{X_{i}}X_{j})\big)\\
                     & =-\sum^{n}_{i=1}\big(h(X_{i}, [X_{i}, X_{j}])-h(X_{i}, \nabla_{X_{i}}X_{j})-h(\nabla_{X_{i}}X_{i}, X_{j})\big)\\
                     & =\sum^{n}_{i=1}\big(h(X_{i}, \nabla_{X_{j}}X_{i})+h(\nabla_{X_{i}}X_{i}, X_{j})\big)\\
                     & =\sum^{n}_{i, k=1}g(\nabla_{X_{j}}X_{i}, X_{k})h(X_{i}, X_{k})+
                         \sum^{n}_{i, k=1}g(\nabla_{X_{i}}X_{i}, X_{k}) h(X_k, X_j)
\end{align*}
where in the third equality above we used the $G$-invariance of $h$.

We next use the fact that covariant derivatives involving Killing vector fields on a homogeneous
 Riemannian manifold can be expressed entirely in terms of Lie brackets (see e.g. Lemma 7.27 in \cite{Bes87}).
After some simplification and replacing brackets for vector fields with the negative  of the corresponding Lie brackets in
$\g$,  we obtain
\begin{equation}  \label{div-formula}
 (\delta_g h)(X_j) =  \sum_{i, k}  h(X_j, X_k) g([X_k, X_i], X_i) -  \sum_{i}  h([X_j, X_i]_{\m}, X_i)
\end{equation}
where $[\cdot, \cdot]_{\m}$ denotes the $Q$-orthogonal projection of the bracket onto $\m$.

Let $\{ e_q^{(\ell)}, 1 \leq \ell \leq r, \, 1 \leq q \leq d_{\ell} := \dim \m_{\ell} \}$ be a $Q$-orthonormal
basis of $\m$ adapted to the decomposition (\ref{m-decomposition}). The corresponding adapted $g$-orthonormal
basis is then given by $ X_q^{(\ell)} := \frac{1}{\sqrt{a_{\ell}}} e_q^{(\ell)}$. We examine separately the two
sums in (\ref{div-formula}). Let $X_j = X_q^{(\ell)} \in \m_{\ell}$.

The first sum is then equal to
$$ \frac{c_{\ell}}{a_{\ell}} \sum_{i=1}^r \sum_{\alpha = 1}^{d_i} \, a_i \,
       Q\left(\left[\frac{e_q^{(\ell)}}{\sqrt{a_{\ell}}},  \frac{e_{\alpha}^{(i)}}{\sqrt{a_i}}\right], \frac{e_{\alpha}^{(i)}}{\sqrt{a_i}} \right)
  =  \frac{c_{\ell}}{a_{\ell}^{\frac{3}{2}}} \sum_{i=1}^r \sum_{\alpha = 1}^{d_i} \,
       Q\left(\left[e_q^{(\ell)},  e_{\alpha}^{(i)} \right], e_{\alpha}^{(i)} \right) = 0  $$
since $Q$ is bi-invariant.

Similarly, the second sum is equal to
$$ \frac{1}{\sqrt{a_{\ell}}} \sum_{i=1}^r \sum_{\alpha = 1}^{d_i} \, c_i \,
       Q\left(\left[e_q^{(\ell)},  \frac{e_{\alpha}^{(i)}}{\sqrt{a_i}}\right], \frac{e_{\alpha}^{(i)}}{\sqrt{a_i}} \right)
  =  \frac{1}{\sqrt{a_{\ell}}}  \,\sum_{i=1}^r \frac{c_i}{a_i} \,\sum_{\alpha = 1}^{d_i} \,
       Q\left(\left[e_q^{(\ell)},  e_{\alpha}^{(i)} \right], e_{\alpha}^{(i)} \right) = 0  $$
again by the bi-invariance of $Q$.
\end{proof}

\begin{rmk}
Note that in the above Lemma, the ${\rm Ad}(K)$-invariant summands $\m_i$ are not assumed to be
irreducible or pairwise inequivalent. This will be important in the next subsection.
\end{rmk}

\begin{prop}\label{CR-linear-instability}
The invariant Einstein metrics on $N^{pq0}$ where $(p, q) \neq (1, 3)$ are $\widetilde{\bf S}$-linearly unstable.
\end{prop}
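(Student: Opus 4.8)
The plan is to combine Proposition~\ref{CR-Instability} with Lemma~\ref{divergencefree} essentially verbatim. By Proposition~\ref{CR-Instability}, for any of the Castellani--Romans or Page--Pope Einstein metrics $g_0 := g(\alpha_0, \beta_0, \gamma_0, \delta_0)$ on $N^{pq0}$ with $(p,q) \neq (1,3)$ there are real parameters $A, B$ so that, with $h := A\,Q|\m_3 \oplus B\,Q|\m_4$ (extended by zero on $\m_1 \oplus \m_2$), the second derivative $\frac{d^2}{dt^2}\widetilde{S}(0)$ of $\widetilde{\bf S}$ along the path $g_0 + th$ is strictly positive. Here $h$ is precisely a $G$-invariant symmetric $2$-tensor of the block-diagonal type appearing in Lemma~\ref{divergencefree} (with $c_1 = c_2 = 0$, $c_3 = A$, $c_4 = B$ in the notation there, relative to the $\Ad(H_{p,q})$-invariant decomposition $\m = \m_1 \oplus \m_2 \oplus \m_3 \oplus \m_4$), so Lemma~\ref{divergencefree} gives $\delta_{g_0} h = 0$ immediately.

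The remaining point is to upgrade $h$ to a genuine TT-tensor, i.e. to arrange $\tr_{g_0} h = 0$ as well, without destroying the positivity of the second variation. This is where I would be slightly careful: $\tr_{g_0} h = 2A/\gamma_0 + 2B/\delta_0$ need not vanish for the specific $(A,B)$ produced in Proposition~\ref{CR-Instability}. However, the normalized functional $\widetilde{\bf S}$ is scale-invariant and locally minimized in conformal (in particular, pure-trace) directions, and its second variation at an Einstein metric, when restricted to the space of metrics of fixed volume and constant scalar curvature, depends only on the TT-part of the variation. Concretely, decompose $h = h^{TT} + h^{\mathrm{tr}}$ where $h^{\mathrm{tr}} = \frac{\tr_{g_0} h}{n}\, g_0$ is the pure-trace part (the divergence-free condition $\delta_{g_0} h = 0$ together with $G$-invariance forces the would-be Lie-derivative part to vanish, since the conformal factor $\frac{\tr_{g_0} h}{n}$ is constant). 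Then $\frac{d^2}{dt^2}\widetilde{S}(0)$ computed above equals $-\frac{1}{2(\Vol)^{(n-2)/n}}\,\mathscr{Q}(h^{TT}, h^{TT})$ by the second variation formula \eqref{2ndvar-Hilbert} restricted to TT-tensors, since pure-trace directions and diffeomorphism directions contribute nothing to the restricted Hessian. Hence positivity of $\frac{d^2}{dt^2}\widetilde{S}(0)$ gives $\mathscr{Q}(h^{TT}, h^{TT}) < 0$, and $h^{TT}$ is a nonzero TT-tensor (nonzero because $h$ is not pure trace: if it were, $\frac{d^2}{dt^2}\widetilde{S}(0) \leq 0$ by conformal local minimality, contradicting the Proposition). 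This exhibits a destabilizing TT-direction, so $g_0$ is $\widetilde{\bf S}$-linearly unstable.

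Alternatively, and perhaps more cleanly, one can avoid the conformal bookkeeping entirely: since $\widetilde{\bf S}$ is already normalized (volume- and scale-invariant) and $\m_3, \m_4$ are two of the summands, one may simply replace the two-parameter family $(\gamma_0 + At, \delta_0 + Bt)$ by a volume-preserving reparametrization and observe that the computation in Proposition~\ref{CR-Instability} already shows $\widetilde{\bf S}$ fails to be locally maximal at $g_0$ among constant-scalar-curvature metrics of fixed volume, because the path can be taken (after the standard Yamabe/conformal adjustment, which only decreases $\widetilde{\bf S}$) to stay in that space while $\widetilde{\bf S}$ strictly increases to second order. Either way, the main (and only real) obstacle is this trace-normalization step; the heart of the argument — the sign computation $f(c) > 0$ for $c \in [-1, -\tfrac{2}{\sqrt5}] \cup [\tfrac{2}{\sqrt5}, 1]$ — is already done in Proposition~\ref{CR-Instability}, and the divergence-free property is already done in Lemma~\ref{divergencefree}. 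I would therefore write the proof as: "Let $h$ be the invariant $2$-tensor supported on $\m_3 \oplus \m_4$ with the coefficients $A, B$ from Proposition~\ref{CR-Instability}. By Lemma~\ref{divergencefree}, $\delta_{g_0} h = 0$. Subtracting its (constant) trace part yields a nonzero TT-tensor $h^{TT}$ with $\mathscr{Q}(h^{TT}, h^{TT}) < 0$ by \eqref{2ndvar-Hilbert} and Proposition~\ref{CR-Instability}. Hence $g_0$ is $\widetilde{\bf S}$-linearly unstable."
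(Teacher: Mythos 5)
Your proposal is correct and follows essentially the same route as the paper: take the invariant tensor $h = A\,Q|_{\m_3}\oplus B\,Q|_{\m_4}$ from Proposition~\ref{CR-Instability}, invoke Lemma~\ref{divergencefree} for $\delta_{g_0}h=0$, and subtract the constant-trace part $\frac{2}{7}\bigl(\frac{A}{\gamma_0}+\frac{B}{\delta_0}\bigr)g_0$, using homothety invariance of $\widetilde{\bf S}$ to see that the second variation is unchanged and the resulting TT-tensor is a destabilizing direction. Your extra checks (that the trace part is a constant multiple of $g_0$, and that $h^{TT}\neq 0$) are sound and only make explicit what the paper leaves implicit.
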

\begin{proof}
Let $g(\alpha_{0}, \beta_{0}, \gamma_{0}, \delta_{0})$ be a fixed but arbitrary invariant Einstein metric on $N^{pq0}\neq N^{130}$.
Up to isometry any such metric is diagonal with respect to the decomposition (\ref{m-decomposition}).
Moreover, with the parameters $A$ and $B$ obtained in Proposition $\ref{CR-Instability}$,  $A\cdot(Q\vert_{\mathfrak{m}_{3}})\oplus B\cdot(Q\vert_{\mathfrak{m}_{4}})$ is an Ad$_H$-invariant symmetric bilinear form on $\m$, and so it induces an $\SU(3)$-invariant
symmetric 2-tensor $h$ on $N^{pq0}$.

By Proposition $\ref{CR-Instability}$, the second variation of $\widetilde{\bf S}$ at $g(\alpha_{0}, \beta_{0}, \gamma_{0}, \delta_{0})$
is strictly positive along $h$, i.e., $\widetilde{\bf S}^{\prime\prime}_{g(\alpha_{0}, \beta_{0}, \gamma_{0}, \delta_{0})}(h, h)>0$. If we
replace $h$ by its trace-free part given by $h_{0}=h-\frac{2}{7}\big(\frac{A}{\gamma_{0}}+\frac{B}{\delta_{0}}\big)g(\alpha_{0}, \beta_{0}, \gamma_{0}, \delta_{0})$, since the normalized total scalar curvature functional is homothety invariant, we have
$$\widetilde{\bf S}^{\prime\prime}_{g(\alpha_{0}, \beta_{0}, \gamma_{0}, \delta_{0})}(h_{0}, h_{0})=\widetilde{\bf S}^{\prime\prime}_{g(\alpha_{0}, \beta_{0}, \gamma_{0}, \delta_{0})}(h, h)>0.$$
But Lemma $\ref{divergencefree}$ implies that $h$ is divergence-free, and so $h_{0}$ is a TT-tensor. Thus $h_{0}$ is a $\widetilde{\bf S}$-linearly unstable direction.
\end{proof}

%%%%%%%%%%%%%%%%%%%%%%%%%%%%%%%%%%%%%%%%%%%%%%%%%%%%%%%%%%%%%%%%%%%%%%%%%%%%%%%%%%%%%%%%%%%%%%%%%%%%%%%%%%%%%%%%%%%%%%%%%%%%%%%%%%

\subsection{Instability of the invariant Einstein metric on $N^{130}$ in \cite{Nik04}}  \label{AWII}
In order to complete the proof of Theorem $\ref{Aloff-Wallach}$, we only need to check the $\widetilde{\bf S}$-linear instability of
the invariant Einstein metric on $N^{130}$ found by Nikonorov in \cite{Nik04}.

For $N^{130}$ the irreducible sub-representations $\mathfrak{m}_{1}$ and $\mathfrak{m}_{4}$ in $(\ref{LieAlgDec})$ are isomorphic to each other. Thus there are $\SU(3)$-invariant metrics on $N^{130}$ that are not block diagonal with respect to the decomposition of the isotropy representation in $(\ref{LieAlgDec})$. In \cite{Nik04}, Nikonorov showed that the two block diagonal (with respect to the decomposition in $(\ref{LieAlgDec})$) $\SU(3)$-invariant Einstein metrics on $N^{130}$ obtained in \cite{CR84} and \cite{PP84} are isometric to each other. He also found a geometrically distinct $\SU(3)$-invariant Einstein metric on $N^{130}$ that is not block diagonal with respect to the decomposition in $(\ref{LieAlgDec})$.

Since $\m_1$ and $\m_4$ are equivalent as $H_{1,3}$-representations, there is a whole circle's worth of different ways of decomposing
$\m_1 \perp \m_4$ as $ W_1 \perp W_2$, where $W_i$ are subspaces of $\m_1 \perp \m_4$ which are isomorphic as $H_{1,3}$-representations
to $\m_1 \approx \m_4$.  Thus  Nikoronov considered the $1$-parameter family of ${\rm Ad}(H_{1,3})$-invariant
irreducible decompositions of the isotropy representation on $N^{130}$ given by
\begin{equation}\label{LieAlgDecInNik}
\mathfrak{m}=\mathfrak{p}_{1}\oplus\mathfrak{p}_{2}\oplus\mathfrak{p}_{3}\oplus\mathfrak{p}_{4}
\end{equation}
where $\mathfrak{p}_{1}={\rm span}(Y_{1}, Y_{2}), \mathfrak{p}_{2}={\rm span}(Y_{3}, Y_{4}), \mathfrak{p}_{3}=\mathfrak{m}_{3}={\rm span}(X_{4}, X_{5}), \mathfrak{p}_{4}=\mathfrak{m}_{2}={\rm span}(Z)$,
$$Y_{1}=-\cos(\alpha)(2X_{2})-\sin(\alpha)(2X_{7}), \qquad Y_{2}=-\cos(\alpha)(2X_{1})-\sin(\alpha)(2X_{6}),$$
$$Y_{3}=\sin(\alpha)(2X_{2})-\cos(\alpha)(2X_{7}), \qquad Y_{4}=\sin(\alpha)(2X_{1})-\cos(\alpha)(2X_{6}),$$
$\alpha\in\mathbb{R}$, and $X_{1}, X_{2}, X_{4}, X_{5}, X_{6}, X_{7}$, and $Z$ are as given in $(\ref{LieAlgebraBasis})$.
He showed that for a suitably chosen value $\alpha\in\mathbb{R}$, this additional invariant Einstein metric is diagonal with respect to the corresponding decomposition in $(\ref{LieAlgDecInNik})$. Therefore, we will consider in the following the invariant metrics on $N^{130}$ of the form
\begin{equation}
g(x_{1}, x_{2}, x_{3}, x_{4})=x_{1}Q^{\prime}\vert_{\mathfrak{p}_{1}}\oplus x_{2}Q^{\prime}\vert_{\mathfrak{p}_{2}}\oplus x_{3}Q^{\prime}\vert_{\mathfrak{p}_{3}}\oplus
x_{4}Q^{\prime}\vert_{\mathfrak{p}_{4}},
\end{equation}
where $Q^{\prime}$ is the multiple of the Killing form of $\mathfrak{su}(3)$ given by $Q^{\prime}(X, Y)=-\frac{1}{2}\tr(XY)$ for $X, Y\in \mathfrak{su}(3)$.
For this family of metrics the scalar curvature formula is given in \cite{Nik04} as
\begin{equation}
\begin{aligned}
s_{g(x_{1}, x_{2}, x_{3}, x_{4})}=
&\frac{12}{x_{1}}+\frac{12}{x_{2}}+\frac{12}{x_{3}}+\frac{6a}{x_{4}}-\frac{3-3a}{2}\bigg(\frac{x_{4}}{x^{2}_{1}}+\frac{x_{4}}{x^{2}_{2}}\bigg)\\
&-2\bigg(\frac{x_{1}}{x_{2}x_{3}}+\frac{x_{2}}{x_{1}x_{3}}+\frac{x_{3}}{x_{1}x_{2}}\bigg)
-3a\bigg(\frac{x_{1}}{x_{2}x_{4}}+\frac{x_{1}}{x_{2}x_{4}}+\frac{x_{4}}{x_{1}x_{2}}\bigg),
\end{aligned}
\end{equation}
where $a=\sin^{2}(2\alpha)$.

The Einstein equations were then considered in three different cases: $a=0, a=1,$ and $0<a<1$. When $a=0$, the  Einstein metrics obtained in \cite{CR84} and \cite{PP84} were recovered. When $a=1$, two solutions of the Einstein  equations were found. Approximate values of these solutions are
\begin{equation}\label{Nik-solution1}
(x_{1}, x_{2}, x_{3}, x_{4})\approx(5.67352, 1.09220, 5.50695, 5.72906),
\end{equation}
and
\begin{equation}\label{Nik-solution2}
(x_{1}, x_{2}, x_{3}, x_{4})\approx(1.09220, 5.67352, 5.50695, 5.72906).
\end{equation}
However, these two solutions give rise to isometric invariant Einstein metrics. When $0<a<1$, no new Einstein metrics were obtained.

We will now show that the new Einstein metric obtained in the $a=1$ case is unstable.
The normalized total scalar curvature of $g(x_{1}, x_{2}, x_{3}, x_{4})$ is
\begin{align*}
\widetilde{\bf S}(g(x_{1}, x_{2}, x_{3}, x_{4}))=
& \,\Vol(Q^{\prime})^{\frac{2}{7}}(x^{2}_{1}x^{2}_{2}x^{2}_{3}x_{4})^{\frac{1}{7}}
\bigg[\frac{12}{x_{1}}+\frac{12}{x_{2}}+\frac{12}{x_{3}}+\frac{6}{x_{4}}\\
&-2\bigg(\frac{x_{1}}{x_{2}x_{3}}+\frac{x_{2}}{x_{1}x_{3}}+\frac{x_{3}}{x_{1}x_{2}}\bigg)
-3\bigg(\frac{x_{1}}{x_{2}x_{4}}+\frac{x_{1}}{x_{2}x_{4}}+\frac{x_{4}}{x_{1}x_{2}}\bigg)\bigg],
\end{align*}
where $\Vol(Q^{\prime})$ is the volume of $N^{130}$ with respect to the metric induced by $Q^{\prime}\vert_{\mathfrak{m}}$.
\begin{prop}
At the solution given by $(\ref{Nik-solution1})$, we have
\begin{equation}
\frac{\partial^{2}}{\partial x^{2}_{2}}\,\widetilde{\bf S}(g(x_{1}, x_{2}, x_{3}, x_{4}))>0.
\end{equation}
\end{prop}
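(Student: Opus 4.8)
The plan is to mirror exactly the argument used for the generic Aloff-Wallach metrics in Proposition~\ref{CR-Instability}, specializing to the single concrete Einstein metric (\ref{Nik-solution1}) on $N^{130}$ found by Nikonorov. First I would compute the partial derivative $\frac{\partial}{\partial x_2}\widetilde{\bf S}(g(x_1,x_2,x_3,x_4))$ from the explicit formula for $\widetilde{\bf S}$ displayed just above the statement. Since the prefactor $(x_1^2 x_2^2 x_3^2 x_4)^{1/7}$ is smooth and positive, differentiating produces $\widetilde{\bf S}$ times a rational expression in $x_1,\dots,x_4$; clearing denominators, one can write
\[
\frac{\partial}{\partial x_2}\widetilde{\bf S}(g(x_1,x_2,x_3,x_4))
= \Vol(Q')^{2/7}\,(x_1^2 x_2^2 x_3^2 x_4)^{1/7}\,\frac{1}{x_1^2 x_2^3 x_3^2 x_4^2}\,G(x_1,x_2,x_3,x_4)
\]
for an explicit polynomial $G$. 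Because $g(\ref{Nik-solution1})$ is an Einstein metric, it is in particular a critical point of $\widetilde{\bf S}$ restricted to the family $g(x_1,x_2,x_3,x_4)$, so $G$ vanishes there (one checks this numerically, or notes it follows from Nikonorov's Einstein equations).

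Next I would differentiate once more. Since $G$ vanishes at the solution, the second derivative of $\widetilde{\bf S}$ in the $x_2$-direction is, up to the strictly positive factor $\Vol(Q')^{2/7}(x_1^2 x_2^2 x_3^2 x_4)^{1/7}(x_1^2 x_2^3 x_3^2 x_4^2)^{-1}$ evaluated at (\ref{Nik-solution1}), simply $\frac{\partial G}{\partial x_2}$ evaluated at (\ref{Nik-solution1}). So the whole proposition reduces to the single numerical inequality $\frac{\partial G}{\partial x_2}(5.67352, 1.09220, 5.50695, 5.72906) > 0$. This is a finite computation: evaluate an explicit polynomial (and its $x_2$-partial) at the approximate solution values. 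To make the argument rigorous rather than merely numerical, I would either carry enough decimal places and bound the error terms, or—cleaner—observe that the $a=1$ Einstein metric is an algebraic number determined by Nikonorov's system, keep $x_1,x_3,x_4$ as functions of $x_2$ via the Einstein relations, reduce $\frac{\partial G}{\partial x_2}$ to a one-variable polynomial in $x_2$ over the relevant interval, and then run the same kind of derivative-sign / interval estimate as in the $f(c)$ analysis of Proposition~\ref{CR-Instability}.

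Having established $\frac{\partial^2}{\partial x_2^2}\widetilde{\bf S}(g(\ref{Nik-solution1}))>0$, the passage to $\widetilde{\bf S}$-linear instability is identical to Proposition~\ref{CR-linear-instability}: the variation $\frac{d}{dt}g(x_1, x_2+t, x_3, x_4)\big|_{t=0}$ is the $\SU(3)$-invariant symmetric $2$-tensor $h = Q'|_{\mathfrak{p}_2}$ (extended by $0$ on the other summands), which by Lemma~\ref{divergencefree}—applied to the decomposition (\ref{LieAlgDecInNik}), whose summands need not be inequivalent—is divergence-free; subtracting its trace part gives a TT-tensor $h_0$ with $\widetilde{\bf S}''_{g}(h_0,h_0)=\widetilde{\bf S}''_{g}(h,h)>0$ by homothety invariance, and then $\nu$-linear instability follows as noted in the introduction. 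The main obstacle is purely computational: producing the polynomial $G$ correctly from the (slightly awkwardly typeset) scalar curvature formula and then certifying the sign of $\frac{\partial G}{\partial x_2}$ at an only-approximately-known algebraic point; everything conceptual is already in place from the earlier subsections.
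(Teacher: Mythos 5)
Your outline is essentially the paper's: differentiate $\widetilde{\bf S}$ once in $x_2$, use the fact that the first derivative vanishes at the Einstein solution so that only the derivative of the bracketed (polynomial) part survives in the second derivative, and then pass to a TT-tensor via Lemma~\ref{divergencefree} applied to the decomposition (\ref{LieAlgDecInNik}) exactly as in Proposition~\ref{CR-linear-instability}. The one place where the paper does something you did not anticipate is the final positivity check, and it is precisely the step you flagged as the ``main obstacle.'' Rather than evaluating $\partial G/\partial x_2$ at the approximately known point and certifying the sign by interval arithmetic or by algebraic elimination, the paper substitutes the first-order criticality identity
\begin{equation*}
-\Bigl(5\tfrac{x_{1}}{x_{3}}+5\tfrac{x_{3}}{x_{1}}+\tfrac{15}{2}\tfrac{x_{1}}{x_{4}}+\tfrac{15}{2}\tfrac{x_{4}}{x_{1}}\Bigr)
=12\tfrac{x_{2}}{x_{1}}-30+12\tfrac{x_{2}}{x_{3}}+6\tfrac{x_{2}}{x_{4}}-9\tfrac{x^{2}_{2}}{x_{1}x_{3}}-\tfrac{27}{2}\tfrac{x^{2}_{2}}{x_{1}x_{4}}
\end{equation*}
back into the second derivative, which collapses it (up to a positive prefactor) to
$12\tfrac{x_{2}}{x_{1}}+\bigl(12-18\tfrac{x_{2}}{x_{1}}\bigr)\tfrac{x_{2}}{x_{3}}+\bigl(6-27\tfrac{x_{2}}{x_{1}}\bigr)\tfrac{x_{2}}{x_{4}}$. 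Every term here is positive as soon as $x_{2}/x_{1}<\tfrac{1}{5}$, which follows from the crude bounds $x_{1}>5.5$ and $x_{2}<1.1$; so the conclusion is robust to the fact that the solution is only known approximately, and no error analysis or algebraic certification is needed. Your proposed routes would presumably also work, but they are heavier, and without some such reduction a bare numerical evaluation of a polynomial at a five-decimal approximation is not by itself a proof; you correctly identified that gap, and the substitution trick is the clean way to close it.
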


\begin{proof}
One easily computes that
\begin{equation}\label{FristDerivativeNik}
\begin{aligned}
\frac{\partial}{\partial x_{2}}\widetilde{\bf S}(g(x_{1}, x_{2}, x_{3}, x_{4}))=
&\Vol(Q^{\prime})^{\frac{2}{7}}(x^{2}_{1}x^{2}_{2}x^{2}_{3}x_{4})^{\frac{1}{7}}\frac{2}{7x_{2}}\bigg[\frac{12}{x_{1}}-\frac{30}{x_{2}}+\frac{12}{x_{3}}
+\frac{6}{x_{4}}+5\frac{x_{1}}{x_{2}x_{3}}\\
&-9\frac{x_{2}}{x_{1}x_{3}}+5\frac{x_{3}}{x_{1}x_{2}}+\frac{15}{2}\frac{x_{1}}{x_{2}x_{4}}
-\frac{27}{2}\frac{x_{2}}{x_{1}x_{4}}+\frac{15}{2}\frac{x_{4}}{x_{1}x_{2}}\bigg].
\end{aligned}
\end{equation}

At the solution given by $(\ref{Nik-solution1})$, the second order partial derivative with respect to $x_{2}$ is
\begin{equation}\label{SecondDerivativenNik}
\begin{aligned}
\frac{\partial^{2}}{\partial x^{2}_{2}}\widetilde{\bf S}(g(x_{1}, x_{2}, x_{3}, x_{4}))=
& \Vol(Q^{\prime})^{\frac{2}{7}}(x^{2}_{1}x^{2}_{2}x^{2}_{3}x_{4})^{\frac{1}{7}}\frac{2}{7x_{2}}\bigg[\frac{30}{x^{2}_{2}}-5\frac{x_{1}}{x^{2}_{2}x_{3}}
-9\frac{1}{x_{1}x_{3}}\\
& -5\frac{x_{3}}{x_{1}x^{2}_{2}}-\frac{15}{2}\frac{x_{1}}{x^{2}_{2}x_{4}}
-\frac{27}{2}\frac{1}{x_{1}x_{4}}-\frac{15}{2}\frac{x_{4}}{x_{1}x^{2}_{2}}\bigg]
\end{aligned}
\end{equation}

Because the first order partial derivative in $(\ref{FristDerivativeNik})$ vanishes at this solution, we have
\begin{equation}
-\bigg(5\frac{x_{1}}{x_{3}}+5\frac{x_{3}}{x_{1}}+\frac{15}{2}\frac{x_{1}}{x_{4}}+\frac{15}{2}\frac{x_{4}}{x_{1}}\bigg)
=12\frac{x_{2}}{x_{1}}-30+12\frac{x_{2}}{x_{3}}+6\frac{x_{2}}{x_{4}}-9\frac{x^{2}_{2}}{x_{1}x_{3}}-\frac{27}{2}\frac{x^{2}_{2}}{x_{1}x_{4}}.
\end{equation}
Substituting this into $(\ref{SecondDerivativenNik})$ and factoring $\frac{1}{x^{2}_{2}}$ out, we obtain
\begin{eqnarray*}
\frac{\partial^{2}}{\partial x^{2}_{2}}\widetilde{\bf S}(g(x_{1}, x_{2}, x_{3}, x_{4}))
&=&\Vol(Q^{\prime})^{\frac{2}{7}}(x^{2}_{1}x^{2}_{2}x^{2}_{3}x_{4})^{\frac{1}{7}}\frac{2}{7x^{3}_{2}}
\bigg[12\frac{x_{2}}{x_{1}}\\
& &+\bigg(12-18\frac{x_{2}}{x_{1}}\bigg)\frac{x_{2}}{x_{3}}+\bigg(6-27\frac{x_{2}}{x_{1}}\bigg)\frac{x_{2}}{x_{4}}\bigg].
\end{eqnarray*}
This is strictly positive for $(x_{1}, x_{2}, x_{3}, x_{4})\approx(5.67352, 1.09220, 5.50695, 5.72906)$, since $x_{1} > 5.5, x_{2}<1.1,$ and therefore $\frac{x_{2}}{x_{1}}<\frac{1}{5}$.
\end{proof}

For the solution $(x_{1}, x_{2}, x_{3}, x_{4})\approx(1.09220, 5.67352, 5.50695, 5.72906)$, similarly one can show that $\frac{\partial^{2}}{\partial x^{2}_{1}}\widetilde{\bf S}(g(x_{1}, x_{2}, x_{3}, x_{4}))$ is strictly positive.

As in Proposition $\ref{CR-linear-instability}$, this implies that the invariant Einstein metric obtained in \cite{Nik04} is $\widetilde{\bf S}$-linearly unstable. Together with Proposition $\ref{CR-linear-instability}$ this completes the proof of Theorem $\ref{Aloff-Wallach}$.

%%%%%%%%%%%%%%%%%%%%%%%%%%%%%%%%%%%%%%%%%%%%%%%%%%%%%%%%%%%%%%%%%%%%%%%%%%%%%%%%%%%%%%%%%%%%%%%%%%%%%%%%%%%%%%%%%%%%%%%%%%%%%

\section{\bf Instability of Einstein metrics on the Stiefel manifolds}   \label{Stiefel}

The Stiefel manifold $V_{2}(\mathbb{R}^{n+1})= \frac{\SO(n+1)}{\SO(n-1)}$ $(n\geq 3)$ may be viewed as a
principal circle bundle over the real oriented Grassmannian $\frac{\SO(n+1)}{\SO(n-1)\SO(2)}$, which is an
irreducible Hermitian symmetric space (with second Betti number $b_2 = 1$). Sagle \cite{Sa70} constructed an
invariant Einstein metric on $V_{2}(\mathbb{R}^{n+1})$ which is now known to be unique (up to isometry and homothety) among all
$\SO(n+1)$-invariant metrics \cite{Ker98}, except when $n=3$. Its relevance for us is that it can be viewed as the regular
Sasaki Einstein metric determined by the base considered as a Fano Einstein manifold with the symmetric
metric scaled so that its scalar curvature equals $2n+2$. In this section, we will show that this Einstein metric
is $\widetilde{\bf S}$-linearly unstable, and therefore $\nu$-linearly unstable. Additionally, in Example
$\ref{hyperquadric}$ in \S $\ref{instab-conf}$, we will show that this Einstein metric is also $\nu$-linearly
unstable along conformal variation directions. When $n=3$, $\SO(4)/\SO(2)$ is diffeomorphic to $S^2 \times S^3$,
so the product metric is a second Einstein metric which is not isometric to the Sasaki Einstein metric. The product
metric is of course also $\widetilde{\bf S}$-linearly unstable.

We will follow the notation in \cite{Ker98}. Embedding $\SO(n-1)$ into $\SO(n+1)$ as
$\SO(n-1)\cong\begin{bmatrix} Id_{2} & 0 \\ 0 & \SO(n-1)\end{bmatrix}\subset \SO(n+1)$ gives rise to the Stiefel manifold
$V_{2}(\mathbb{R}^{2})$ as a quotient space $\frac{SO(n+1)}{SO(n-1)}$. On the Lie algebra level, the embedding is $\mathfrak{so}(n-1)\cong\begin{bmatrix} 0 & 0 \\ 0 & \mathfrak{so}(n-1)\end{bmatrix}\subset \mathfrak{so}(n+1)$.
We then choose the ${\rm Ad}_{\SO(n-1)}$-invariant complement $\mathfrak{p}=\mathfrak{so}(n-1)^{\perp}$
(with respect to the Killing form). The isotropy representation of $\SO(n-1)$ on $\mathfrak{p}$ can be decomposed into irreducible sub-representations as $\mathfrak{p}=\mathfrak{p}_{0}\oplus\mathfrak{p}_{1}\oplus\mathfrak{p}_{2}$, where $\mathfrak{p}_{0}=$span$\{E_{12}\}$, $\mathfrak{p}_{i}=$span$\{E_{j, 2+i} \vert 1\leq i\leq n-1\}$ for $j=1, 2$,
and $E_{ij}$ denotes the matrix with 1 in the $(i,j)$-entry,  $-1$ in the $(j,i)$-entry, and zeros everywhere else.

Let $Q^{\prime}$ be the multiple of the  Killing form of $\mathfrak{so}(n+1)$ given by
$Q^{\prime}(X, Y)=-\frac{1}{2}\tr(XY)$ for $X, Y\in \mathfrak{so}(n+1)$, and choose
$Q^{\prime}|_{\mathfrak{p}}$ as the background metric. Then we consider $\SO(n+1)$-invariant metrics on
$\frac{\SO(n+1)}{\SO(n-1)}$ induced by
\begin{equation*}
g(x_{0}, x_{1}, x_{2})=x_{0}Q^{\prime}|_{\mathfrak{p}_{0}}\oplus x_{1}Q^{\prime}|_{\mathfrak{p}_{1}}\oplus x_{2}Q^{\prime}|_{\mathfrak{p}_{2}}
\end{equation*}
for $x_{0}, x_{1}, x_{2}>0$.
Recall the scalar curvature formula in section 4 of \cite{Ker98} as
\begin{equation*}
s_{g(x_{0}, x_{1}, x_{2})}=(n-1)\left(\frac{n-1}{x_{1}}+\frac{n-1}{x_{2}}+\frac{1}{x_{0}}\right)
-\frac{n-1}{2}\left(\frac{x_{1}}{x_{2}x_{0}}+\frac{x_{2}}{x_{1}x_{0}}+\frac{x_{0}}{x_{1}x_{2}}\right).
\end{equation*}
By considering variations of this scalar curvature function, Kerr showed that $x_{1}=x_{2}$ and $x_{0}=\frac{2(n-1)}{n}x_{1}$ give the unique SO$(n+1)$-invariant Einstein metric up to diffeomorphisms and homotheties. In particular, we will consider the Einstein metric with $x_{0}=2(n-1), x_{1}=x_{2}=n$.

Now the normalized total scalar curvature of $g(x_{0}, x_{1}, x_{2})$ is
\begin{eqnarray*}
\widetilde{\bf S}(g(x_{0}, x_{1}, x_{2}))
                      &=& \Vol(Q^{\prime})^{\frac{2}{2n-1}}(x_{0}x^{n-1}_{1}x^{n-1}_{2})^{\frac{1}{2n-1}}
                      \bigg[(n-1)\left(\frac{n-1}{x_{1}}+\frac{n-1}{x_{2}}+\frac{1}{x_{0}}\right)\\
                      & & -\frac{n-1}{2}\left(\frac{x_{1}}{x_{2}x_{0}}+\frac{x_{2}}{x_{1}x_{0}}+\frac{x_{0}}{x_{1}x_{2}}\right)\bigg],
\end{eqnarray*}
where $\Vol(Q^{\prime})$ is the volume of the Stiefel manifold with the metric induced by $Q^{\prime}$. Its first partial derivative with respect to $x_{1}$ is
\begin{eqnarray*}
\frac{\partial}{\partial x_{1}}\widetilde{\bf S}(g(x_{0}, x_{1}, x_{2}))
&=&\Vol(Q^{\prime})^{\frac{2}{2n-1}}(x_{0}x^{n-1}_{1}x^{n-1}_{2})^{\frac{1}{2n-1}}\frac{(n-1)}{(2n-1)x_{1}}
\bigg[-\frac{n(n-1)}{x_{1}}\\
& &+(n-1)\left(\frac{n-1}{x_{2}}+\frac{1}{x_{0}}\right)-\frac{(3n-2)x_{1}}{2x_{2}x_{0}}
+\frac{n}{2}\left(\frac{x_{2}}{x_{1}x_{0}}+\frac{x_{0}}{x_{1}x_{2}}\right)\bigg].
\end{eqnarray*}
Then at the Einstein metric with $x_{0}=2(n-1), x_{1}=x_{2}=n$, we have the second derivative with respect to $x_{1}$ as
\begin{eqnarray*}
& &\frac{\partial^{2}}{\partial x^{2}_{1}}\widetilde{\bf S}(g(x_{0}, x_{1}, x_{2}))|_{(2(n-1), n, n)}\\
&=& \Vol(Q^{\prime})^{\frac{2}{2n-1}}(x_{0}x^{n-1}_{1}x^{n-1}_{2})^{\frac{1}{2n-1}}\frac{(n-1)}{(2n-1)x_{1}}
\bigg[\frac{n(n-1)}{x^{2}_{1}}-\frac{3n-2}{2x_{2}x_{0}}\\
& &+\frac{n}{2}\left(-\frac{x_{2}}{x^{2}_{1}x_{0}}-\frac{x_{0}}{x^{2}_{1}x_{2}}\right)\bigg]\bigg|_{(x_{0}, x_{1}, x_{2})=(2(n-1), n, n)}\\
&=& \Vol(Q^{\prime})^{\frac{2}{2n-1}}(2(n-1)n^{2n-2}_{1})^{\frac{1}{2n-1}}\frac{(n-1)[(n-3)(2n^2-2n+1)+1]}{2(2n-1)(n-1)n^3}\\
&\geq& \Vol(Q^{\prime})^{\frac{2}{2n-1}}(2(n-1)n^{2n-2}_{1})^{\frac{1}{2n-1}}\frac{(n-1)}{2(2n-1)(n-1)n^3}>0,
\end{eqnarray*}
for $n\geq3$.

As in Proposition $\ref{CR-linear-instability}$, together with Lemma $\ref{divergencefree}$,
this  implies the $\widetilde{\bf S}$-linear instability of the invariant Sasaki Einstein metric on
$V_{2}(\mathbb{R}^{n+1})$ with $n\geq3$.

%%%%%%%%%%%%%%%%%%%%%%%%%%%%%%%%%%%%%%%%%%%%%%%%%%%%%%%%%%%%%%%%%%%%%%%%%%%%%%%%%%%%%%%%%%%%%%%%%%%%%%%%%%%%%%%%%%%%%%%%%%%%%%%

\section{\bf Instability from conformal deformations} \label{instab-conf}

A second source of instability for the $\nu$-functional comes from conformal deformations of the
Einstein metric in question. A sufficient condition for instability is that the smallest nonzero eigenvalue of
the Laplace-Beltrami operator is less than $2 \Lambda$, where $\Lambda$ denotes the Einstein constant
\cite{CH15}. In fact, by \cite{CHI04}, provided that the Einstein manifold $(M, g)$ is not the constant
curvature sphere, the operator $\mathscr{S}$ given by
\begin{equation}
{\mathscr S} u := - {\rm Hess}_g u + (\Delta u) g + \Lambda u g
\end{equation}
is injective and maps eigenfunctions of the Laplace-Beltrami operator with eigenvalue $\lambda$ to divergence-free
symmetric $2$-tensors which are eigentensors of the Lichnerowicz Laplacian with the same eigenvalue.

When the Einstein manifold is a homogeneous space $(G/K, g)$ where $G$ is a semisimple compact Lie
group, $K$ is a closed subgroup, and $g$ is induced by the negative of the Killing form
$Q_G$ of $G$, then $L^2(G/K, g)$ is a Hilbert space direct sum of the irreducible finite-dimensional
unitary representations of $G$ which are of class $1$ with respect to $K$, with multiplicity equal to the
dimension of the subspace of $K$-fixed vectors \cite{MU80}. Furthermore, the eigenvalues are given by the
Casimir constants $Q_G( \lambda, \lambda + 2 \delta )$ of the irreducible class $1$ representations,
where $\lambda$ is the dominant weight of the representation, and $2\delta$ is the sum of the positive roots of $G$.
By abuse of notation we have used $Q_G$ to denote also the inner product induced by the Killing form on the dual of the
chosen real Cartan subalgebra in $\g$. Obviously, in the above we can replace $g$ by any negative multiple of
the Killing form. (Note that $Q_G$ is negative definite on $\g$ but positive definite on
the Cartan subalgebra.)

If, on the other hand, the Einstein manifold lies in the canonical variation
(see \cite{Bes87} pp. 252-255) of the Killing form metric along a closed intermediate subgroup
$K \subset H \subset G$, then the spectrum of the Laplacian can be determined using the results
in \cite{BB90}, which improves upon the work in \cite{BeBo82}.

\begin{example} \label{NK-S3S3}
Let $G = S^3 \times S^3 \times S^3$ and $K$ be the image of the diagonally embedded  $S^3$  in $G$.
The Killing form metric is well-known to be nearly K\"ahler, and the dimension of the associated space of
real Killing spinors is $1$. We will show that this Einstein metric is $\nu$-unstable.

For convenience we will take the metric $g$ on $G$ to be the product of the normalized Killing form $Q^{\prime}$
of $\SU(2)$, which is that multiple of the Killing form such that the maximal root of $\SU(2)$ has length $-\sqrt{2}$.
Since $Q = - 4 Q^{\prime}$, using Corollary 1.7  and Table IV in \cite{WZ85}, one deduces that the Einstein constant of
$g$ is $\Lambda = \frac{5}{3}$. (Note that $g$ induces three times $Q^{\prime}$ on the diagonal subalgebra and
the isotropy representation of $G/K$ consists of two copies of the adjoint representation of $\SU(2)$.)

Next we determine the irreducible class $1$ unitary representations of $S^3 \times S^3 \times S^3$ relative to
the diagonal subgroup. The irreducible unitary representations of $S^3 \times S^3 \times S^3$
consist of external tensor products $\rho_1 \hat{\otimes} \rho_2 \hat{\otimes} \rho_3$ of irreducible
unitary representations of the individual factors. If only one $\rho_i$ is non-trivial, then the representation
remains irreducible upon restriction to the diagonal subgroup and so cannot be of class $1$.
If exactly two of the $\rho_i$ are non-trivial and equal to the $2$-dimensional vector representation of $S^3$, then
the Clebsch-Gordon formula shows that a $1$-dimensional trivial summand appears upon restriction to the diagonal
subgroup. Hence by permuting the $S^3$ factors we obtain three inequivalent class $1$ irreducible representations
of $S^3 \times S^3 \times S^3$ all having the same Casimir constant of $2 \cdot \frac{3}{2} = 3$. Since this is
less than $2 \Lambda = \frac{10}{3}$, $\nu$-instability has been established.
\end{example}

It seems appropriate to recall here the following lemma which we will use repeatedly later in this section.

\begin{lem} \label{compareCasimir}
Let $\g$ be a complex semisimple Lie algebra with Killing form $Q$. Let $\lambda_1, \lambda_2$ denote the dominant weights of two
irreducible $($finite-dimensional$)$ complex representations of $\g$. Assume that for each simple root $\alpha$
of $\g$ we have
$$  \frac{2 Q(\lambda_1, \alpha)}{Q(\alpha, \alpha)} \geq \frac{2 Q(\lambda_2, \alpha)}{Q(\alpha, \alpha)}. $$
Then the corresponding Casimir constants satisfy
$$ Q(\lambda_1, \lambda_1 + 2\delta) \geq Q(\lambda_2, \lambda_2 + 2\delta)$$
with equality iff $\lambda_1 = \lambda_2$.
\end{lem}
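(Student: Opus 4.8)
The plan is to write both Casimir constants in a basis adapted to the fundamental weights and exploit the hypothesis coordinate by coordinate. Let $\alpha_1, \dots, \alpha_\ell$ be the simple roots of $\g$ and $\varpi_1, \dots, \varpi_\ell$ the corresponding fundamental weights, characterized by $\frac{2 Q(\varpi_i, \alpha_j)}{Q(\alpha_j, \alpha_j)} = \delta_{ij}$. Write $\lambda_1 = \sum_i m_i \varpi_i$ and $\lambda_2 = \sum_i n_i \varpi_i$; the hypothesis says precisely that $m_i \geq n_i$ for every $i$ (and these are nonnegative integers, since $\lambda_1, \lambda_2$ are dominant). Also recall that $2\delta = \sum_i 2\varpi_i$, i.e., $\delta = \sum_i \varpi_i$. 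The key positivity input I would use is that the Gram matrix $(Q(\varpi_i, \varpi_j))_{i,j}$ is entrywise nonnegative; equivalently, each fundamental weight $\varpi_i$, when expanded in the basis of simple roots, has nonnegative coefficients, and distinct simple roots have $Q(\alpha_i,\alpha_j) \le 0$. This is the standard fact that the inverse of the Cartan matrix has nonnegative entries, combined with $Q(\alpha_i, \alpha_i) > 0$.

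First I would set $\mu = \lambda_1 - \lambda_2 = \sum_i (m_i - n_i)\varpi_i$, which is a nonnegative combination of fundamental weights by the hypothesis. Then expand
\begin{equation*}
Q(\lambda_1, \lambda_1 + 2\delta) - Q(\lambda_2, \lambda_2 + 2\delta) = Q(\lambda_1 - \lambda_2,\, \lambda_1 + \lambda_2 + 2\delta) = Q(\mu,\, \lambda_1 + \lambda_2 + 2\delta).
\end{equation*}
Now $\lambda_1 + \lambda_2 + 2\delta = \sum_i (m_i + n_i + 2)\varpi_i$ is a strictly positive combination of the fundamental weights, and $\mu$ is a nonnegative combination. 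So the right-hand side is $\sum_{i,j} (m_i - n_i)(m_j + n_j + 2)\, Q(\varpi_i, \varpi_j)$, a sum of products of nonnegative numbers (using entrywise nonnegativity of the Gram matrix of the $\varpi_i$). Hence the difference is $\geq 0$, which is the desired inequality.

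For the equality case: if $\lambda_1 \neq \lambda_2$ then $m_{i_0} > n_{i_0}$ for some index $i_0$. In the double sum, isolate the diagonal term $i = j = i_0$: it equals $(m_{i_0} - n_{i_0})(m_{i_0} + n_{i_0} + 2) Q(\varpi_{i_0}, \varpi_{i_0}) > 0$ since $Q(\varpi_{i_0}, \varpi_{i_0}) > 0$ (the fundamental weights are nonzero vectors in the Cartan subalgebra on which $Q$ is positive definite), while every other term is $\geq 0$. Therefore the difference is strictly positive, so equality forces $\lambda_1 = \lambda_2$. The converse (equality when $\lambda_1 = \lambda_2$) is trivial.

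The main obstacle — really the only nontrivial input — is the entrywise nonnegativity of $Q(\varpi_i, \varpi_j)$; I would either cite it as the standard fact that $C^{-1} \geq 0$ entrywise for the Cartan matrix $C$ (with $Q(\varpi_i,\varpi_j) = (C^{-1})_{ij}\, Q(\alpha_j,\alpha_j)/2$ up to the usual normalization bookkeeping between $Q$ and the root-length conventions), or prove it quickly: $\varpi_i$ expanded in simple roots has coefficients $(C^{-1})_{ji}$, and one checks $(C^{-1})_{ji} \ge 0$ together with $Q(\alpha_j,\alpha_k)\le 0$ for $j\ne k$ while $Q(\alpha_j,\alpha_j)>0$, giving $Q(\varpi_i,\varpi_j) = \sum_{j',k'} (C^{-1})_{j'i}(C^{-1})_{k'j} Q(\alpha_{j'},\alpha_{k'})$; that last expression still needs care because the off-diagonal $Q(\alpha_{j'},\alpha_{k'})$ are negative, so the cleanest route is instead the identity $Q(\varpi_i, \varpi_j) = \sum_{k} (C^{-1})_{ki}\, Q(\alpha_k, \varpi_j) = (C^{-1})_{ji} \cdot \tfrac{1}{2}Q(\alpha_j,\alpha_j) \ge 0$, using the defining property of fundamental weights. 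I would present this short computation to keep the lemma self-contained.
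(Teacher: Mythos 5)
The paper offers no proof of this lemma at all --- it is introduced with ``recall'' and treated as a standard fact --- so there is no in-paper argument to compare against. Your proof is correct and is essentially the standard one for monotonicity of the Casimir constant in the coefficientwise dominance order: the polarization identity $Q(\lambda_1,\lambda_1+2\delta)-Q(\lambda_2,\lambda_2+2\delta)=Q(\lambda_1-\lambda_2,\,\lambda_1+\lambda_2+2\delta)$, the translation of the hypothesis into $m_i\ge n_i\ge 0$ in the fundamental-weight basis, the identity $\delta=\sum_i\varpi_i$, and the entrywise nonnegativity $Q(\varpi_i,\varpi_j)=(C^{-1})_{ji}\,\tfrac12 Q(\alpha_j,\alpha_j)\ge 0$ are all correctly deployed, and isolating the diagonal term $(m_{i_0}-n_{i_0})(m_{i_0}+n_{i_0}+2)\,Q(\varpi_{i_0},\varpi_{i_0})>0$ settles the equality case, since $Q$ is positive definite on the real span of the weights. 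One small editorial point: your first formulation of the key input (nonnegativity of the coefficients of $\varpi_i$ in the simple roots together with $Q(\alpha_i,\alpha_j)\le 0$ for $i\ne j$) does not by itself give $Q(\varpi_i,\varpi_j)\ge 0$, as you yourself notice midway through; the clean identity $Q(\varpi_i,\varpi_j)=(C^{-1})_{ji}\,\tfrac12 Q(\alpha_j,\alpha_j)$ with which you conclude is the right statement, so the earlier detour should simply be deleted from the final write-up.
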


\begin{rmk}
Applying the above lemma to Example \ref{NK-S3S3} we see that the first eigenspace of the Laplacian
has dimension $12$.
\end{rmk}

Using the classification theorem of Butruille \cite{Bu05}  for strict nearly K\"ahler simply connected homogeneous
$6$-manifolds, one obtains

\begin{prop}  \label{NK-appl}
The only $\nu$-stable strict nearly K\"ahler simply connected homogeneous $6$-manifold is $S^6 = {\rm G}_2/\SU(3)$
with the round metric.
\end{prop}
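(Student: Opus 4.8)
The plan is to combine Butruille's classification \cite{Bu05} of simply connected strict nearly K\"ahler homogeneous $6$-manifolds with the two instability mechanisms used in this paper. By \cite{Bu05}, such a manifold is, up to scaling, one of four: the round sphere $S^6 = {\rm G}_2/\SU(3)$; the space $S^3 \times S^3 = (S^3 \times S^3 \times S^3)/\Delta S^3$ with the metric of Example \ref{NK-S3S3}; the twistor space $\mathbb{CP}^3 = \Sp(2)/(\Sp(1)\cdot\U(1))$ of $S^4$; and the twistor space $\SU(3)/T^2$ of $\mathbb{CP}^2$, each with its nearly K\"ahler Einstein metric. Example \ref{NK-S3S3} already shows that $S^3 \times S^3$ is $\nu$-unstable. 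For the round $S^6$ one uses the exceptional behaviour of $\mathscr S$ from \cite{CHI04}: on a round sphere $\mathscr S$ annihilates exactly the first eigenspace, so the only eigenvalue below $2\Lambda = 10$, namely $\lambda_1 = 6$, produces no genuine conformal deformation; since the next eigenvalue $\lambda_2 = 14$ exceeds $2\Lambda$ and the round metric is $\widetilde{\bf S}$-stable, $S^6$ is $\nu$-stable. It therefore remains to prove that the nearly K\"ahler metrics on $\mathbb{CP}^3$ and on $\SU(3)/T^2$ are $\nu$-unstable, and since neither is a round sphere, by \cite{CHI04} and \cite{CH15} it is enough to exhibit one nonzero Laplace eigenvalue strictly below $2\Lambda$.

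To produce such an eigenvalue I would realize each metric inside a canonical variation of a twistor fibration. For $\mathbb{CP}^3$ take $K = \Sp(1)\cdot\U(1) \subset H = \Sp(1)\cdot\Sp(1) \subset G = \Sp(2)$, giving the Riemannian submersion $\pi : \mathbb{CP}^3 \to S^4 = \Sp(2)/(\Sp(1)\cdot\Sp(1))$ with fibre $H/K = \mathbb{CP}^1$ and symmetric (round) base; for $\SU(3)/T^2$ take $K = T^2 \subset H = {\rm S}(\U(1)\times\U(2)) \subset G = \SU(3)$, giving $\pi : \SU(3)/T^2 \to \mathbb{CP}^2$ with fibre $\mathbb{CP}^1$ and symmetric (Fubini--Study) base. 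In both cases the fibres are totally geodesic, so for any first eigenfunction $f_0$ of the symmetric base the basic function $\pi^* f_0$ is a Laplace eigenfunction of the total space with eigenvalue $\lambda_1(\bar g_N)$, \emph{independent of the fibre scale}, hence in particular for the nearly K\"ahler member of the canonical variation. It then suffices to normalize the nearly K\"ahler metric so that $\Lambda = 5$ (equivalently, so that the Killing spinor constant is $c = \pm\tfrac12$), read off from this normalization the induced scale on the base $S^4$ (resp.\ $\mathbb{CP}^2$), and verify the single inequality $\lambda_1(\bar g_N) < 10$. The dimension of the destabilizing eigenspace is then the multiplicity of that eigenvalue, which one extracts from the class-$1$ decomposition of $L^2$ together with Lemma \ref{compareCasimir}; the spectral description of canonical variations in \cite{BB90}, refining \cite{BeBo82}, gives the full spectrum of any member of the variation and in particular locates the bottom.

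The main obstacle is precisely this normalization bookkeeping: deciding which member of the canonical variation is the nearly K\"ahler one (rather than the K\"ahler--Einstein one), expressing the size of the symmetric base in terms of $\Lambda$ from the O'Neill and Casimir data of $\mathfrak{sp}(2)$ and $\mathfrak{su}(3)$, and then confirming the strict inequality; everything else is formal. An alternative route that avoids canonical variations entirely is to treat $\mathbb{CP}^3 = G/K$ and $\SU(3)/T^2 = G/K$ directly as homogeneous spaces carrying the (non-Killing) nearly K\"ahler metric $g = \sum_i x_i\, Q|_{\m_i}$: on each class-$1$ isotypic component of $L^2(G/K)$ the scalar Laplacian of $g$ acts on the $K$-fixed subspace as $\sum_i \tfrac{1}{x_i}\,{\rm Cas}_{\m_i}$, and for the smallest class-$1$ representations this subspace is one-dimensional, so the operator is a scalar; evaluating these partial Casimirs on the lowest weights singled out by Lemma \ref{compareCasimir} again reduces the matter to a short numerical check that the resulting eigenvalue lies below $2\Lambda$.
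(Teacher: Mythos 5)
Your reduction to Butruille's list and your treatment of $S^6$ and of $S^3\times S^3$ agree with the paper. For $\cc\pp^3$ your route does work, and it is genuinely different from the paper's: the nearly K\"ahler metric there is the normal metric induced by $-B_{\sy(2)}$, with $\Lambda=\frac{5}{12}$, and the $5$-dimensional representation $\rho_5$ of $\Sp(2)\cong\Spin(5)$ is of class $1$ relative to $\Sp(1)\cdot\U(1)$ with Casimir constant $\frac{2}{3}<\frac{5}{6}=2\Lambda$; its fixed vector gives exactly the functions pulled back from $S^4$, so you get $\nu$-linear instability from a $5$-dimensional space of conformal directions. The paper instead notes that the nearly K\"ahler metric is the local \emph{minimum} of $\widetilde{\bf S}$ in the canonical variation of the twistor fibration over $S^4$ (the Fubini--Study metric, being $\widetilde{\bf S}$-stable by Koiso, must be the local maximum), which gives the stronger conclusion of $\widetilde{\bf S}$-linear instability via a TT-tensor.

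The gap is in the $\SU(3)/T^2$ case: the mechanism you propose provably cannot succeed there. The nearly K\"ahler metric on the full flag manifold is the normal metric $-B|_{\m}$, which has $\Lambda=\frac{5}{12}$, so $2\Lambda=\frac{5}{6}$. The class-$1$ representations of $(\SU(3),T^2)$ are exactly those whose highest weight lies in the root lattice, and the smallest is the adjoint representation, with Casimir constant $1$. Hence $\lambda_1=1>\frac{5}{6}=2\Lambda$ (equivalently $\lambda_1=12>10$ in the $\mathrm{scal}=30$ normalization), so there is \emph{no} Laplace eigenvalue below $2\Lambda$; in particular the eigenvalue you propose to use, $\lambda_1(\cc\pp^2)$ pulled back through the twistor fibration, equals $1$, not something below $\frac{5}{6}$. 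The contrast with $\cc\pp^3$ is precisely that the round base $S^4$ attains the Obata bound $\frac{4}{3}\Lambda_B=\frac{2}{3}$ while $\cc\pp^2$ only attains the K\"ahler--Einstein bound $2\Lambda_B=1$, and the relevant threshold $\frac{5}{6}$ sits strictly between these two values. Your fallback of computing the Laplacian directly on class-$1$ isotypic components is still a statement about the scalar spectrum and fails for the same reason. To handle $\SU(3)/T^2$ one needs a destabilizing TT-tensor rather than a conformal direction; the paper produces one from Theorem 1.1 of \cite{WW18} applied to the totally geodesic fibration over $\cc\pp^2$, using $\Lambda_B-2\Lambda_F=\frac{1}{2}-\frac{2}{3}<0$.
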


\begin{proof}
Recall that a strict nearly K\"ahler structure is one that is not K\"ahler. The classification theorem of Butruille
states that, up to homothety, the only simply connected homogeneous strict nearly K\"ahler $6$-manifolds are
$S^6 = {\rm G}_2/\SU(3), (\SU(2)\times \SU(2) \times \SU(2))/ \Delta \SU(2), \cc{\rm P}^3 = \Sp(2)/(\Sp(1) \times \U(1)), $
and $\SU(3)/T^2$, each equipped with a unique invariant nearly K\"ahler structure.

In the first case, the nearly K\"ahler metric is the constant curvature metric, which is $\widetilde{\bf S}$-stable.
The second case is treated in Example \ref{NK-S3S3} above. For the last two cases, the nearly K\"ahler metric
lies in the canonical variation of the Riemannian submersions given by the twistor fibrations
$$ \Sp(2)/(\Sp(1) \U(1))  \longrightarrow \Sp(2)/(\Sp(1)\times \Sp(1)) = S^4 $$
$$   \SU(3)/T^2 \longrightarrow \SU(3)/{\rm S}(\SU(2)\U(1)) = \cc{\rm P}^2$$
equipped with the metrics induced by the negative of the Killing form of $G$. The Einstein metrics are given
by 9.72 of \cite{Bes87} and the first graph of Fig. 9.72 there. The Fubini-Study metric on $\cc{\rm P}^3$
is $\widetilde{\bf S}$-stable \cite{Koi80} so the strict nearly K\"ahler one must be given by the local minimum in the canonical variation.

For the last case, the fiber and base metrics are Einstein. The Einstein constant $\Lambda_B$ of the base is $\frac{1}{2}$
since we are using the Killing form metric on a symmetric space (see Corollary 1.6 in \cite{WZ85}). The fibers
are $\SU(2)/\U(1)$ and hence are symmetric as well. But the Killing form of $\SU(3)$ restricts to $\frac{3}{2}$
times the Killing form of $\SU(2)$ by page 583 of \cite{WZ85}. So the Einstein constant $\Lambda_F$ of the fibers is
$\frac{2}{3} \cdot \frac{1}{2} = \frac{1}{3}$.  It follows that $\Lambda_B - 2 \Lambda_F = -\frac{1}{6} < 0$
and a destablizing TT-tensor is given, for example, by Theorem 1.1 in \cite{WW18}.
\end{proof}

\begin{rmk} It is actually known that the Killing form metric on $\SU(3)/T^2$ is a local minimum
for the normalized scalar curvature functional on the space of $\SU(3)$-invariant metrics. This can be
checked by directly computing the Hessian of the normalized scalar curvature function at the Killing form
metric.  Since $b_2(\SU(3)/T^2) = 2$, it follows from \cite{CHI04} that the three invariant K\"ahler Einstein
metrics on it are also $\nu$-unstable.

Analogous computations for $\SU(n+1)/T^n$ show that the Killing form metric is also $\nu$-unstable.
\end{rmk}

Consider next the situation in which we have a circle bundle
\begin{equation}  \label{bundle-over-symmetric}
  F=\U(1) = (H \cdot \U(1))/H   \longrightarrow M = G/H  \stackrel{\pi}{\longrightarrow} B= G/(H \cdot \U(1))
\end{equation}
where the base is an irreducible compact Hermitian symmetric space of dimension $2m$.
$B$ is simply connected and has second Betti number equal to $1$.  The above fibration becomes a Riemannian
submersion with totally geodesic fibers if we give $G/H$ and $G/(H \cdot \U(1))$ the normal metrics induced by $Q_G$.
We shall denote these respectively by $g$ and $\check{g}$, and denote the induced metrics on the fibers by $\hat{g}$.
In particular the base metric is K\"ahler-Einstein and has Ricci curvature $\frac{1}{2}$.

The canonical variation of $g$ introduced in Chapter 9.G of \cite{Bes87} is the $1$-parameter family of metrics
$$ g_t =  t^2 \hat{g} + \check{g} $$
on the total space $M$, where in the above definition the horizontal and vertical distributions of the Riemannian submersion are
used implicitly. There is a unique choice of $t^2$ (indeed $t^2 = \frac{2m}{m+1}$) which makes $g_t$ into an Einstein metric
with Einstein constant $\Lambda=\frac{m}{2m+2}$. If we multiply $g_t$ by $\frac{1}{4m+4}$, then the resulting metric
would have Einstein constant $2m$ and the submersed metric on the base would have Einstein constant $2m+2$.
In other words, the rescaled metric would be Sasaki Einstein. Since stability properties are independent of homothety, those
of $g_t$ are the same as those of the corresponding Sasaki Einstein metric.

Simply connected Sasaki Einstein manifolds are spin and admit non-trivial real Killing spinors. So we shall
take care in the following to ensure that $G/H$ is also simply connected. Let $t_*^2$ denote the special value
$\frac{2m}{m+1}$. It follows that the Einstein metric $g_{t_*}$ admits at least two linearly independent Killing spinors.
We shall show below that in some cases the Einstein metric $g_{t_*}$ is $\nu$-unstable by exhibiting an eigenvalue of the
Laplacian which is less than $2\Lambda_{g_{t_*}} = \frac{m}{m+1}$.

To do this we will use the results in \cite{BeBo82} and \cite{BB90}, which we recall briefly below. Let $\Delta_t$ and $\Delta_v$
denote respectively the Laplacian of $g_t$ and the vertical Laplacian of the Riemannian submersion (\ref{bundle-over-symmetric})
(with metric $g_1$). Because of the totally geodesic property, all fibers of our fibration are isometric, and the vertical Laplacian
is just the collection of Laplacians of the fibers (for the metrics $\hat{g}$). Then $\Delta_1$ is the Laplacian of
the Killing form metric, whose eigenvalues can be found using representation theory. We have the relation
$$ \Delta_t = \Delta_1 + \left(\frac{1}{t^2} -1 \right) \Delta_v.$$
Note that in the totally geodesic situation, $\Delta_1$ and $\Delta_v$ commute. The operator
$\Delta_v$ is not elliptic, however, but has discrete spectrum, and its eigenvalues can have infinite multiplicities.
The crucial fact for us is the
\begin{thm} $($\cite{BeBo82} Theorem 3.6, \cite{BetPi13} Remark 3.3$)$  \label{simult-eigen}
$L^2(M, g_1)$ has a Hilbert space basis consisting of simultaneous eigenfunctions of $\Delta_1$ and $\Delta_v$.
\end{thm}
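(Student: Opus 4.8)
The plan is to exploit the fact that, although $\Delta_v$ is not elliptic, $\Delta_1$ is, and the two operators commute; this lets one reduce the simultaneous diagonalization to finite-dimensional linear algebra inside each eigenspace of $\Delta_1$. First I would note that, $(M,g_1)$ being a closed Riemannian manifold, $\Delta_1$ is a nonnegative self-adjoint elliptic operator, so $L^2(M,g_1)$ splits as the orthogonal Hilbert-space direct sum $\bigoplus_\mu E_\mu$ of its eigenspaces, each $E_\mu$ finite-dimensional, and elliptic regularity gives $E_\mu \subset C^\infty(M)$.

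The main step is to show that $\Delta_v$ restricts to a symmetric endomorphism of each $E_\mu$. For invariance: both $\Delta_1$ and $\Delta_v$ act on $C^\infty(M)$ and commute there, since the fibration $(\ref{bundle-over-symmetric})$ is a Riemannian submersion with totally geodesic fibers whose horizontal distribution is the $g_1$-orthogonal complement of the vertical one (the fact recalled just before the statement). Hence $\phi\in E_\mu$ implies $\Delta_1(\Delta_v\phi) = \Delta_v(\Delta_1\phi) = \mu\,\Delta_v\phi$, so $\Delta_v\phi\in E_\mu$. For symmetry: the coarea identity $\int_M f\,dV_{g_1} = \int_B\bigl(\int_{\pi^{-1}(b)} f\,dV_{\mathrm{fiber}}\bigr)\,dV_{\check g}(b)$, valid for any Riemannian submersion, together with the self-adjointness of the Laplacian on each closed fiber, gives $\langle\Delta_v\phi,\psi\rangle_{L^2(M)} = \langle\phi,\Delta_v\psi\rangle_{L^2(M)}$ for all $\phi,\psi\in C^\infty(M)$, in particular on $E_\mu$.

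I would then finish by diagonalizing: a symmetric endomorphism of a finite-dimensional inner product space admits an orthonormal eigenbasis $\mathcal{B}_\mu\subset E_\mu$, and the union $\bigcup_\mu\mathcal{B}_\mu$ is an orthonormal Hilbert-space basis of $L^2(M,g_1)$ whose elements are simultaneous eigenfunctions of $\Delta_1$ and $\Delta_v$ (a fixed $\Delta_v$-eigenvalue may recur across infinitely many $E_\mu$, which accounts for its possibly infinite multiplicity). The only delicate point is the passage from the formal commutation relation on $C^\infty(M)$ to genuine $\Delta_v$-invariance of the $L^2$-eigenspaces of $\Delta_1$; this is clean here precisely because elliptic regularity forces $E_\mu\subset C^\infty(M)$, so no issue about unbounded-operator domains arises. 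Alternatively, one can argue representation-theoretically: since $M=G/H$ and $\pi$ is $G$-equivariant, $\Delta_v$ commutes with the $G$-action on $L^2(M)$, hence preserves each $G$-isotypic component, on which $\Delta_1$ acts by the Casimir scalar; diagonalizing $\Delta_v$ on the finite-dimensional multiplicity spaces yields the same basis.
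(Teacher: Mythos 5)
Your argument is correct, and it is essentially the standard proof underlying the cited references: the paper itself offers no proof of this statement beyond pointing to \cite{BeBo82} and \cite{BetPi13}, and the commutation of $\Delta_1$ and $\Delta_v$ that you take as input is exactly the fact the paper records in the sentence immediately preceding the theorem. Reducing simultaneous diagonalization to the finite-dimensional eigenspaces of the elliptic operator $\Delta_1$, with $\Delta_v$ acting there as a symmetric endomorphism via the submersion coarea formula, is precisely how B\'erard Bergery--Bourguignon establish the result, so there is nothing to add.
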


 It follows from this that every eigenvalue of $\Delta_t$ is the sum of an eigenvalue of $\Delta_1$ and
$\frac{1}{t^2} -1$ times an eigenvalue of $\Delta_v$. It is not completely straight-forward to decide which
combinations of eigenvalues occur in general, but this has been worked out in \cite{BB90}.

In our situation, we will assume $m>1$; otherwise $B= S^2$ and $M$, being of dimension $3$, must have
constant curvature and so is $\widetilde{\bf S}$-stable. Then
$t_*^2 > 1$ and $\frac{1}{t_*^2} - 1 = -\left(\frac{m-1}{2m}  \right) < 0.$
Let $h_0$ denote the usual metric on $\U(1) = S^1$ so that it has circumference $2\pi$. Suppose the Killing form metric
induces on the $\U(1)$ fibers in  (\ref{bundle-over-symmetric}) the metric $a h_0$ where $a > 0$. Then since the eigenvalues
of $\Delta_v$ are of the form $\frac{1}{a} \ell^2$ where $\ell \in \zz$, it follows that the eigenvalues of $\Delta_{t_*}$
are of the form
\begin{equation} \label{eigenvalueform}
\lambda +  \left(\frac{1}{t_*^2} - 1\right) \frac{\ell^2}{a} =   \lambda - \left(\frac{m-1}{2m}\right) \frac{ \ell^2}{a} \leq \lambda,
\end{equation}
where $\lambda$ is an eigenvalue of $\Delta_1$.

In order to apply the results in \cite{BB90}, we need to write $M = G/H$ as  $G/H \times_L \U(1)$
where $L:=\U(1)$ acts freely on the right of $P=G/H=M$ and isometrically on the left of $F = \U(1)$.
Note that the metric on $M$ is $g_{t_*}$ and the metric on $F$ is $t_*^2 a h_0$. On the other hand,
the principal bundle $p: P \longrightarrow B$ is the projection of $G/H$ onto $B=G/K$ where both spaces
are equipped with the normal metric induced by $Q_G$, and so $L$ indeed acts via isometries
of this metric.  Now in the proof of the results in \cite{BB90}, the authors employ a separate canonical
variation along the fibers of this Riemannian submersion, which, when combined with Cheeger's trick, kills off
the metric along $L$ as the variation parameter tends to infinity. In the limit we then get the eigenvalues of
$\Delta_{t_*}$ expressed as the sum of eigenvalues of the horizontal Laplacian of the fibration $p$ and
``corresponding" eigenvalues of $(F, t_*^2 a h_0)$.  Here ``corresponding"  means that the action of the group
$L$ on the irreducible summands of the eigenspaces in $L^2(P, Q_G)$ and $L^2(F, t_*^2 a h_0)$ must be the same.
Finally, note that the eigenvalues of the horizontal Laplacian of the fibration $p$ can be written as a
difference of an eigenvalue of the Laplacian of $Q_G$ and an eigenvalue of the vertical Laplacian corresponding
to the metric $a h_0$. This gives back the form  (\ref{eigenvalueform}) of the eigenvalue together with
the additional information as to which $\ell$ can occur for a given $\lambda$.

\medskip

\noindent{\bf Observation}: The above discussion implies that if we can find an irreducible unitary representation
of $G$ that is of class $1$ relative to $H$ on which $L$ acts non-trivially and if this representation has
a Casimir constant $\leq \frac{m}{m+1}$ then the Einstein metric $g_{t*}$ is $\nu$-unstable. Furthermore, if inequality
holds for the Casimir constant, then the same conclusion holds without having to check whether the action of $L$ is trivial or not.

\begin{example}  \label{hyperquadric}
Let $G=\SO(m+2), K= \SO(m) \times \SO(2)$, and $H= \SO(m)$ with $m\geq 3$. Then $L = \SO(2)$ and $B= G/K$
is the hyperquadric of complex dimension $m$. Note that $G/H$ is simply connected. The vector representation
$\rho_{m+2}$ of $\SO(m+2)$ on $\cc^{m+2}$ has a fixed point set of complex dimension $2$ when restricted to
$\SO(m)$. $L$ acts on the right of $\cc^{m+2}$ via the usual
representation of $\U(1)$ by rotations ($\ell = 1$). The element $i$ of the Lie algebra of $\U(1)$ corresponds to the
matrix in $\so(m+2)$ consisting of zeros everywhere except for a single $2 \times 2$ block in the lower right hand
corner given by
$$ \left( \begin{array}{rr}
             0   &  -1 \\
             1  &  0
          \end{array} \right).
$$
This matrix has length $2m$ with respect to $Q_G$, and so the constant $a = 2m$.
The Casimir constant of $\rho_{m+2}$ is $\frac{m+1}{2m} < \frac{m}{m+1}$
since $Q_G = 2(m+2 -2) Q_G^{\prime}$ where $Q_G^{\prime}$ is that multiple of the negative of the Killing form
so that the maximal root has length $-\sqrt{2}$. (See pp. 583-586 of \cite{WZ85} for more details.)
The corresponding eigenvalue of $\Delta_{t_*}$ is
$$\frac{m+1}{2m} - \frac{m-1}{2m} \frac{1}{2m}.$$
Since the multiplicity of the eigenvalue $\frac{m+1}{2m}$ in $L^2(\SO(m+2)/\SO(m))$ is $2(m+2)$ we obtain a
$2(m+2)$-dimensional positive definite subspace for the second variation of the $\nu$-functional that is
orthogonal to the unstable direction we found in \S \ref{Stiefel}.
\end{example}

\begin{example}  \label{E6}
Let $G = {\rm E}_6$,  $H = \Spin(10)$,  and $K = (\Spin(10) \times \U(1))/\Delta(\zz/4)$. Then
$B=G/K$ is a Hermitian symmetric space of dimension $2m = 32$, and $G/H$ is simply connected.
(But $G/K$ is not effective since the center $\zz/3$ of ${\rm E}_6$ (lying in the $\U(1)$ factor in $K$)
is the ineffective kernel.) The Einstein constant $\Lambda_{g_{t_*}}$ is equal to $\frac{16}{34}$.

Let $\pi_{\lambda}$ be one of the two lowest dimensional irreducible unitary representations  of ${\rm E}_6$,
and $\lambda$ be its dominant weight. The complex dimension of $\pi_{\lambda}$ is $27$, and by Table 25, p. 203, of
\cite{Dyn52}, upon restriction to $\Spin(10)$ it decomposes as $\rho_{10} \oplus \Delta^{+}_{10} \oplus \I$
where $\rho_{10}$ is the vector representation of $\Spin(10)$, $\Delta^{+}_{10}$ is the positive spin representation,
and $\I$ denotes a trivial one-dimensional representation. Hence $\pi_{\lambda}$ is of class $1$ with respect to
$\Spin(10)$. If we picked the other lowest dimensional representation, which is contragedient to $\pi_{\lambda}$,
then in the decomposition the $+$ spin representation would be replaced by the $-$ spin representation.
Now $Q_{{\rm E}_6} = 24 Q^{\prime}_{{\rm E}_6}$ so from Table III, p. 586 of \cite{WZ85},
$Q_G(\lambda, \lambda + 2\delta) = \frac{1}{24} \cdot \frac{52}{3} = \frac{13}{18} < \frac{16}{17} = 2 \Lambda_{g_{t_*}}.$
So there is no need to determine the action of $L = \U(1)$ on the right of $\pi_{\lambda}$. We obtain
a $2 \cdot 27 = 54$-dimensional subspace of divergence-free symmetric $2$-tensors on which the second
variation of the $\nu$-functional is positive definite.
\end{example}

\begin{example}  \label{E7}
Let $G= {\rm E}_7$, $H= {\rm E}_6$ and $K = {\rm E}_6 \cdot \U(1)$ where $K$ is the quotient of
${\rm E}_6 \times \U(1)$ by the diagonally embedded $\zz/3$. (The center of ${\rm E}_6$ is
$\zz/3$.) $G/H$ is simply connected.  The dimension of $G/K = B$ is $2m= 54$ and so the Einstein
constant $\Lambda$ for $g_{t_*}$ is $\frac{27}{56}$. $G/K$ again is not effective, but can be made so
by dividing by the center of ${\rm E}_7$, which is $\zz/2$.

We consider the lowest dimensional non-trivial irreducible representation $\pi_{\lambda}$ of ${\rm E}_7$,
which is  of dimension $56$.
By Table 25, p. 204 of \cite{Dyn52}, upon restriction to ${\rm E}_6$, $\pi_{\lambda}$ decomposes as
$2 \I \oplus \rho$, where $\rho$ is the real irreducible representation of ${\rm E}_6$ corresponding to one of the
$27$-dimensional irreducible complex representations of ${\rm E}_6$. So $\pi_{\lambda}$ is of class $1$ with respect to
${\rm E}_6$ with fixed point set of dimension $2$. Because $Q_G = 36 Q_G^{\prime}$, the Casimir constant
$Q_G(\lambda, \lambda + 2 \delta) = \frac{1}{36} \cdot \frac{57}{2} = \frac{57}{72} < 2\Lambda = \frac{27}{28}$
(see Table III, p. 586 of \cite{WZ85}). So again we do not need to determine the action of $\U(1)$ on this
irreducible summand in $L^2(G/H)$ and we obtain a $2 \cdot 56$-dimensional subspace on which the second variation of the
$\nu$-functional is positive definite.
\end{example}

\begin{example} \label{CGr-2planes}
Let $G = \SU(p+2)$, $H=\SU(p) \times \SU(2)$, and $K = {\rm S}(\U(p)\times \U(2))$ with $p \geq 2$. Then $G/H$ is simply
connected. Note that $G/K$ has the distinction of being the only Hermitian symmetric space that is also quaternionic
symmetric. Its dimension is $2m = 4p$, so the Einstein constant $\Lambda_{t_*} = \frac{p}{2p+1}$.

Let $\mu_k$ denote the vector representation of $\SU(k)$ on $\cc^k$.  We claim that $\Lambda^2 \mu_{p+2}$,
which is irreducible, is of class $1$ relative to $H$. This follows from the calculation
\begin{equation}  \label{rep-decompose}
\Lambda^2 \mu_{p+2}|\, \SU(p)\times \SU(2) = \Lambda^2 \mu_p \hat{\otimes} \I \oplus \I \hat{\otimes} \Lambda^2 \mu_2
      \oplus \mu_p \hat{\otimes} \mu_2
\end{equation}
where $\I$ denotes the $1$-dimensional trivial representation and $\hat{\otimes}$ denotes the external
tensor product. Since $\mu_2$ has dimension $2$ and the determinants in $\SU(2)$ equal to $1$, we get a
single trivial summand upon restriction, provided $p > 2$.

Let $\lambda$ denote the dominant weight of $\mu_{p+2}$. Since $Q_{\SU(p+2)} = 2(p+2) Q_{\SU(p+2)}^\prime$,
using Table III of \cite{WZ85}, we have
$$ Q_G(\lambda, \lambda + 2\delta) = \frac{1}{2(p+2)} \cdot 2p \cdot \frac{p+3}{p+2} = \frac{p(p+3)}{ (p+2)^2} <
       2 \Lambda_{g_{t_*}} = \frac{2p}{2p+1}.$$
So again it is unnecessary to determine the action of $\U(1) = L$ on $\Lambda^2 \mu_{p+2}$. We obtain a
$(p+2)(p+1)/2$-dimensional subspace on which the second variation of the $\nu$-functional is positive definite.

Notice that when $p=2$, $G/H = \SU(4)/(\SU(2)\times \SU(2)) = \SO(6)/\SO(4)$, which we analysed in Example (\ref{hyperquadric}).
In this special case, we have an additional trivial summand coming from the $\Lambda^2  \mu_p$ in  (\ref{rep-decompose}),
and so the multiplicity of $\Lambda^2 \mu_{p+2}$ in $L^2(G/H)$ is doubled, which is consistent with the analysis
in Example (\ref{hyperquadric}).
\end{example}

%%%%%%%%%%%%%%%%%%%%%%%%%%%%%%%%%%%%%%%%%%%%%%%%%%%%%%%%%%%%%%%%%%%%%%%%

\section{\bf Low-dimensional homogeneous Einstein spaces and Sasaki Einstein spaces} \label{homog}

In this section we will first apply the results in the earlier sections and in \cite{WW18} to determine the stability
of low-dimensional simply connected compact homogeneous Einstein manifolds. Given such a manifold we will write it
in the form $G/K$ where $G$ is compact, connected, semisimple, and $K$ is a closed subgroup of $G$. We also assume that $G$
acts almost effectively on $G/K$. These assumptions are not too restrictive, since the isometry group of a compact
Riemannian manifold is compact, and the semisimple part of the identity component of a transitive Lie group acting
on a simply connected closed manifold also acts transitively on it.

The $\widetilde{\bf S}$ and $\nu$-linear stability of the symmetric metric on compact symmetric spaces
was analysed by Koiso \cite{Koi80} and Cao-He \cite{CH15}. We shall therefore assume that $(G, K)$ is not a symmetric
pair unless otherwise stated. Also recall that any product Einstein metric with positive scalar curvature is $\widetilde{\bf S}$-linearly unstable.

We shall begin with dimension five, since Jensen \cite{J69} proved that all simply connected homogeneous $4$-manifolds
are symmetric.

\smallskip
\noindent{\bf I. Dimension five}
\smallskip

The classification of simply connected compact homogeneous Einstein $5$-manifolds was given in
\cite{ADF96}. There are only two non-symmetric cases: the Stiefel manifold $\SO(4)/\SO(2)$, and the family
$(\SU(2) \times \SU(2))/U_{k,l}$ where $k, l$ are relatively prime integers not both equal to $1$, and
$U_{k, l}$ is the circle embedded by $e^{i \theta} \mapsto (e^{i k \theta}, e^{i l \theta})$. The unique
$\SU(2) \times \SU(2)$-invariant Einstein metric is in fact of Riemannian submersion type over
$S^2 \times S^2$. It is not Sasaki Einstein, and by Corollary 1.3 in \cite{WW18}, it is $\widetilde{\bf S}$-linearly
unstable. The case of the Stiefel manifold $\SO(4)/\SO(2)$ actually corresponds to the $(k, l) = (1, 1)$ case of the
above infinite family. There are two invariant Einstein metrics on this space. One is the Sasaki Einstein metric, which is
$\widetilde{\bf S}$-linearly unstable, and the other is the product metric, which is also $\widetilde{\bf S}$-linearly
unstable.

The symmetric $5$-manifolds are $S^5$ (stable), $S^3 \times S^2$ ($\widetilde{\bf S}$-linearly unstable),
and $\SU(3)/\SO(3)$, which is neutrally linearly stable, i.e., $\nu$-linearly stable and the kernel of
the second variation operator contains a symmetric $2$-tensor orthogonal to the orbit of the diffeomorphism
group.

\medskip
\noindent{\bf II. Dimension six}
\medskip

The classification of simply connected compact homogeneous Einstein metrics in dimension $6$ is as yet
incomplete. The only open case is that of $S^3 \times S^3$ with a left-invariant metric. The remaining possibilities
are classified in \cite{NR03}. In the same paper, the authors showed that if the left-invariant Einstein metric
has an additional circle of isometries acting by right translations, then up to isometries and homotheties it
must be the product metric or the strict nearly K\"ahler metric induced by the Killing form on
$(\SU(2) \times \SU(2) \times \SU(2))/ \Delta \SU(2)$. Quite recently, this result has been improved
in \cite{BCHL18} to allow the same conclusion as long as $S^3\times S^3 = G/K$ with $K \neq \zz/2$.

The stability of the strict nearly K\"ahler metrics was dealt with in Proposition \ref{NK-appl} in section
\ref{instab-conf}. The only non-symmetric case is that of $\cc\pp^3 = \Sp(2)/(\Sp(1) \times \U(1))$
with the Ziller metric. This metric is $\widetilde{\bf S}$-linearly unstable as it lies in the canonical variation
of the Fubini-Study metric on $\cc\pp^3$, viewed as a Riemannian submersion with totally
geodesic fibers over the self-dual Einstein space $\hh\pp^1= S^4$.

The symmetric cases are all $\widetilde{\bf S}$-linearly unstable except for $S^6 $ and $\cc\pp^3$, which are
both stable.

\medskip
\noindent{\bf III. Dimension seven}
\medskip

The seven-dimensional simply connected compact homogeneous Einstein manifolds were classified in
\cite{Nik04}. Except for $S^7$ the symmetric cases are all product manifolds, and hence are $\widetilde{\bf S}$-linearly
unstable. As for the non-symmetric cases, those which are not product manifolds consist of
\begin{enumerate}
\item[$($1$)$] the Aloff-Wallach spaces $N_{k, l}$, with $k, l$ relatively prime integers; \\
\item[$($2$)$]  the circle bundles over $S^2 \times S^2 \times S^2$;  \\
\item[$($3$)$]  the circle bundles over $\cc\pp^2 \times S^2$;  \\
\item[$($4$)$]  the Jensen squashed $7$-sphere; \\
\item[$($5$)$]  the Stiefel manifold $SO(5)/\SO(3)$; \\
\item[$($6$)$]  the isotropy irreducible space $\Sp(2)/\SU(2)$ where the embedding of $\SU(2)$ is via the
        irreducible $4$-dimensional symplectic representation.
\end{enumerate}

The first case is covered by Theorem \ref{Aloff-Wallach}, proved in \S \ref{AWI} and \ref{AWII}. The second and
third cases have $\widetilde{\bf S}$-coindex of at least $2$ and $1$ respectively by results
in \cite{WW18}. The metric in case $4$ lies in the canonical variation of the Riemannian submersion given by
the Hopf fibration. It is clearly $\widetilde{\bf S}$-linearly unstable since the round metric on $S^7$ is stable.
The Euclidean cone of the Jensen sphere has $\Spin(7)$ holonomy, i.e., the Jensen metric is nearly ${\rm G}_2$ with only
a $1$-dimensional space of real Killing spinors. The fifth case is discussed in \S \ref{Stiefel} and in Example
$\ref{hyperquadric}$ in \S $\ref{instab-conf}$. The nature of the
last case remains open.

The discussions in I - III above completes the proof of Theorem \ref{lowdim}.

\medskip

 One of the two isometry classes of $\SU(3)$-invariant Einstein metrics on $N_{1,1}$ is $3$-Sasakian and fits
into the more general context of regular $3$-Sasakian manifolds. We refer the reader to Chapter 13 of \cite{BG08}
for background about this family of spaces. It turns out that regular $3$-Sasakian manifolds are given by certain principal $\SO(3)$ or $\Sp(1)$
bundles over a quaternionic K\"ahler manifold with positive scalar curvature. (One gets an $\Sp(1)$ bundle only
when the quaternionic K\"ahler manifold is quaternionic projective space.) The prevailing conjecture is that
the only quaternionic K\"ahler manifolds with positive scalar curvature are the quaternionic symmetric spaces.
This conjecture has been proved in dimensions $4$ \cite{Hit81} and $8$  \cite{PS91}.

The $3$-Sasakian metric makes the bundle projection into a Riemannian submersion with totally geodesic fibres.
By looking at the canonical variation of this Riemannian submersion, it follows immediately that there is a
second Einstein metric on the principal bundle that is not isometric to the $3$-Sasakian metric.
(This fact was independently observed by B\'erard Bergery and S. Salamon.) In fact the second Einstein metric is
always a local minimum in the canonical variation, see e.g., Theorem 3.4.1 in \cite{BG99}.  So this more general
viewpoint explains the existence of the second Einstein metric and its $\widetilde{\bf S}$-linear instability, and applies
in particular to $N_{1, 1}$. The $\widetilde{\bf S}$-linear instability of the $3$-Sasakian metric, which cannot
be detected from the canonical variation, can be explained by Corollary 1.7 in \cite{WW18}, since it is also the
Sasaki Einstein metric on the circle bundle over one of the homogeneous K\"ahler Einstein metrics on $\SU(3)/T$,
which has $b_2 > 1$.

The same argument works for the regular $3$-Sasakian manifold over the complex Grassmannian $\SU(p+2)/({\rm S}(\U(p) \times \U(2))$,
which are the only Hermitian symmetric spaces with a quaternionic K\"ahler structure. For the twistor spaces of the other compact
quaternionic symmetric spaces, the second Betti number is $1$, so their instability is at present unclear. We have therefore deduced

\begin{cor}  \label{3-Sasakian} The two Einstein metrics lying in the canonical variation of the
regular $3$-Sasakian fibration $\SO(3) \longrightarrow \SU(p+2)/({\rm S}( \U(p) \times \Delta {\rm S}^1) )
\longrightarrow \SU(p+2)/({\rm S}(\U(p) \times \U(2))$, $p \geq 1$, are both $\widetilde{\bf S}$-linearly unstable.
\end{cor}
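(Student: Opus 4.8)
The corollary concerns the two Einstein metrics in the canonical variation of the regular $3$-Sasakian fibration over the complex Grassmannian $\SU(p+2)/({\rm S}(\U(p) \times \U(2)))$. I want to handle the two metrics by two rather different mechanisms, as foreshadowed in the paragraph preceding the statement. For the second Einstein metric (the one which is \emph{not} $3$-Sasakian): it lies in the canonical variation of a Riemannian submersion with totally geodesic fibres $\SO(3) \to M \to B$ over the quaternionic K\"ahler symmetric space $B = \SU(p+2)/({\rm S}(\U(p)\times\U(2)))$ with positive scalar curvature, and by Theorem~3.4.1 in \cite{BG99} it is always a local minimum of the normalized scalar curvature restricted to the canonical variation. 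A local minimum along a one-parameter family of constant-scalar-curvature, fixed-volume metrics is achieved by a TT-tensor (the vertical trace-free tensor $h = \hat g - (\dim F / \dim M)\, g_{t_*}$ that generates the canonical variation is divergence-free and trace-free, exactly as in Lemma~\ref{divergencefree} applied to the two summands vertical/horizontal), and along it $\widetilde{\bf S}$ has a local minimum, hence $\widetilde{\bf S}''_{g_{t_*}}(h,h) \geq 0$; ruling out the degenerate case $=0$ gives strict $\widetilde{\bf S}$-linear instability. Alternatively, and more cleanly, I can invoke Theorem~1.1 (or the relevant canonical-variation criterion) of \cite{WW18}: the second Einstein metric in the canonical variation of a Riemannian submersion with totally geodesic fibres whose base and fibre are Einstein is $\widetilde{\bf S}$-linearly unstable whenever $\Lambda_B - 2\Lambda_F < 0$ at that metric; this inequality is exactly what forces the second critical point to be a local minimum rather than a local maximum, so it holds automatically here.

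For the $3$-Sasakian metric itself, the canonical variation of the $\SO(3)$-fibration cannot detect the instability (it sits at the \emph{maximum} end of that one-parameter family). Instead I use the observation already made in the text: the regular $3$-Sasakian manifold $M = \SU(p+2)/({\rm S}(\U(p) \times \Delta {\rm S}^1))$ is \emph{also} a regular Sasaki Einstein manifold, namely the circle bundle over a Fano K\"ahler--Einstein manifold $B'$, where $B'$ is one of the homogeneous K\"ahler--Einstein metrics on the flag manifold $\SU(p+2)/{\rm S}(\U(p)\times\U(1)\times\U(1))$ obtained by collapsing the $\Sp(1)/\U(1) = S^2$ fibre of the twistor fibration over the Grassmannian. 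Since $B'$ is a flag manifold of $\SU(p+2)$ with rank of second cohomology $b_2(B') = 2 > 1$, Corollary~1.7 of \cite{WW18} applies directly and yields that the associated regular Sasaki Einstein metric on $M$ is $\widetilde{\bf S}$-linearly unstable. The only thing to check here is the geometric identification: that the $3$-Sasakian metric on $M$ really coincides (up to homothety) with the regular Sasaki Einstein metric over this particular flag manifold $B'$, which is standard in the $3$-Sasakian literature (one of the three orthogonal Sasaki structures in the $3$-Sasakian $S^2$-family is regular over $B'$); I would cite Chapter~13 of \cite{BG08} for this.

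Assembling: for each $p \geq 1$ the two Einstein metrics in the canonical variation are the $3$-Sasakian one and the ``second'' one, and I have just exhibited $\widetilde{\bf S}$-linearly destabilizing TT-tensors for each — in the first case via Corollary~1.7 of \cite{WW18} (using $b_2 > 1$ of the associated flag base), in the second case via the canonical-variation instability criterion of \cite{WW18} together with $\Lambda_B - 2\Lambda_F < 0$. This proves the corollary. I should also remark that for $p = 1$ this recovers the $3$-Sasakian Einstein metric on $N_{1,1}$ and its companion, consistent with \S\ref{AW}.

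\medskip

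The step I expect to be the main obstacle is the \emph{geometric identification} used for the $3$-Sasakian metric: verifying carefully that the regular $3$-Sasakian metric on $\SU(p+2)/({\rm S}(\U(p)\times\Delta{\rm S}^1))$ is homothetic to the regular Sasaki Einstein circle-bundle metric over the flag manifold $B' = \SU(p+2)/{\rm S}(\U(p)\times\U(1)\times\U(1))$ — i.e., matching the characteristic vector field of the Sasaki structure with one of the three Killing fields of the $3$-Sasakian structure, checking that the corresponding circle action is free (regularity), and confirming $b_2(B') = 2$. Everything else is a clean appeal to \cite{WW18} and \cite{BG99}. A secondary, minor check is that the divergence-free property of the canonical-variation tensor for the second metric really falls under Lemma~\ref{divergencefree} (it does: one takes $r = 2$ with $\m_1$ vertical and $\m_2$ horizontal, neither required irreducible, per the Remark after that lemma), and that the local-minimum assertion of Theorem~3.4.1 in \cite{BG99} is strict enough to conclude strict linear instability rather than mere non-stability; if strictness is an issue one falls back on the \cite{WW18} criterion, which gives a strict negative direction directly.
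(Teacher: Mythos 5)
Your proposal is correct and follows essentially the same route as the paper: the non-$3$-Sasakian metric is handled as the local minimum of the canonical variation of the $\SO(3)$-fibration (Theorem 3.4.1 of \cite{BG99}), and the $3$-Sasakian metric is identified with the regular Sasaki Einstein metric on the circle bundle over the flag manifold $\SU(p+2)/{\rm S}(\U(p)\times\U(1)\times\U(1))$, whose second Betti number exceeds $1$, so that Corollary 1.7 of \cite{WW18} applies. The extra checks you flag (divergence-freeness via Lemma \ref{divergencefree}, strictness of the minimum, the regularity of the chosen Sasaki structure) are exactly the points the paper leaves implicit.
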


%%%%%%%%%%%%%%%%%%%%%%%%%%%%%%%%%%%%%%%%%%%%%%%%%%%%%%%%%%%%%%%%%%%%%%%%%%%%%%%%%%%%%%%%%%%%%%%%%%%%%%%%%%%%%%%%%%%%%%%%%%%%%%%%%

%%%%%%%%%%%%%%%%%%%%%%%%%%%%%%%%%%%%%%%%%%%%%%%%%%%%%%%%%%%%%


\begin{thebibliography}{XXXXX9}
\bibitem[ADF96]{ADF96} Alekseevsky, A. D., Dotti, I., Ferrais, C.: Homogeneous Ricci positive $5$-manifolds.
          Pac. J. Math. \textbf{175}, 1-12 (1996).
\bibitem[Ba93]{Ba93} B\"ar, C.: Real Killing spinors and holonomy. Comm. Math. Phys. \textbf{154}, 509-521 (1993).
\bibitem[Bau89]{Bau89} Baum, H.: Complete Riemannian manifolds with imaginary Killing spinors. Ann. Glob. Anal. Geom.
        \textbf{7}(3), 205-226 (1989).
\bibitem[BFGK91]{BFGK91} Baum, H., Friedrich, T., Grunewald, R., Kath. I.,  Twistors and Killing spinors
         on Riemannian manifolds. Teubner-Texte zur Mathematik. Band \textbf{124}, (1991).
\bibitem[BCHL18]{BCHL18} Belgun, F., Cort\'es, V., Haupt A. S., Lindemann, D.: Left-invariant Einstein metrics on
        $S^3 \times S^3$. J. Geom. Phys. \textbf{128}, 128-139  (2018).
\bibitem[BeBo82]{BeBo82} B\'erard Bergery, L., Bourguignon, J.-P.: Laplacians and Riemannian submersions
     with totally geodesic fibres. Ill. J. Math. \textbf{26}(2), 181-200 (1982).
\bibitem[Bes87]{Bes87} Besse, A.L.: Einstein manifolds. Berlin: Springer 1987.
\bibitem[BetPi13]{BetPi13} Bettiol, R. G., Piccone, P.: Bifurcation and local rigidity of homogeneous solutions
        to the Yamabe problem on spheres. Calc. Var. PDE. \textbf{47}, 789-807 (2013).
\bibitem[BB90]{BB90} Besson, G., Bordoni, M.: On the spectrum of Riemannian submersions with totally geodesic fibres.
    Rend. Mat. Acc. Lincei, \textbf{1}, s. 9, 335-340 (1990).
\bibitem[BG99]{BG99} Boyer, C., Galicki, K.:  $3$-Sasakian manifolds. In Surveys in differential geometry Vol. VI: essays
          on Einstein manifolds. International Press.  123-184 (1999).
\bibitem[BG08]{BG08} Boyer, C., Galicki, K.:  Sasakian geometry. Oxford Math. Monographs. Oxford University Press (2008).
\bibitem[Bu05]{Bu05} Butruille, J.-B.: Classification des vari{\'e}t{\'e}s approximativement k\"ahleriennes homog{\`e}nes.
      Ann. Glob. Anal. Geom. \textbf{27}, 201-225 (2005).
\bibitem[CHI04]{CHI04} Cao, H.-D., Hamilton, R., Ilmanen, T.: Gaussian densities and stability for some Ricci solitons.
     arXiv:0404165, (2004).
\bibitem[CH15]{CH15} Cao, H.-D., He, C.: Linear stability of Perelman's $\nu$-entropy on symmetric spaces of compact type.
         J. reine angew. Math. \textbf{709}, 229-246 (2015).
\bibitem[CM12]{CM12} Cao, H.-D., Zhu, M.: On second variation of Perelman's Ricci shrinker entropy. Math. Ann.
         \textbf{353}, 747-762  (2012).
\bibitem[CR84]{CR84} Castellani, L., Romans, L. J.: $N=1$ and $N=1$ supeprsymmetry in a new class of solutions for $d=11$
        supergravity. Nucl. Phys. \textbf{B238}, 683-701 (1984).
\bibitem[DWW05]{DWW05} Dai, X., Wang, X., Wei, G.: On the stability of Riemannian manifold with parallel spinors.
            Invent. Math. \textbf{161}(1), 151-176 (2005).
\bibitem[DNP86]{DNP86} Duff, M. J., Nilsson, B. E. W., Pope, C. N.: Kaluza-Klein supergravity. Phys. Rep.
        \textbf{130}, 1-142 (1986).
\bibitem[Dyn52]{Dyn52} Dynkin, E.: Semisimple subalgebras of the classical groups. Mat. Sbornik N. S. \textbf{30}(72),
       349-462 (1952); AMS Translations, Series 2, Volume \textbf{6}, 111-244 (1957).
\bibitem[FH17]{FH17} Foscolo, F., Haskins, M.: New ${\rm G}_2$-holonomy cones and exotic nearly K\"ahler structures
       on $S^6$ and $S^3 \times S^3$. Ann. Math. \textbf{185}, 59-130 (2017).
\bibitem[Fr80]{Fr80} Friedrich, T.: Die erste Eigenwert des Dirac-operators einer kompakten Riemannschen Mannigfaltigkeit
          nichtnegativer  Skalarkr\"ummung. Nach. Math. \textbf{97}, 117-146 (1980).
\bibitem[FrK90]{FrK90} Friedrich, T., Kath, I.: $7$-dimensional compact Riemannian manifolds with  Killing spinors.
          Comm. Math. Phys. \textbf{133}, 543-561 (1990).
\bibitem[FKSM97]{FKSM97} Friedrich, T., Kath, I., Semmelmann, U., Moroianu, A.: On nearly parallel G2-structures, J. Geom. Phys. \textbf{23} 259-286 (1997).
\bibitem[GMSW05]{GMSW05} Gauntlett, J., Martelli, D., Sparks, J., Waldram, D.: Supersymmetric AdS backgrounds in string
     and M-theory, AdS/CFT correspondence: Einstein metrics and their conformal boundaries, IRMA Lecture Math. Theor. Phys.,
     Eur. Math. Soc., \textbf{8}, 217-252 (2005).
\bibitem[GiHa02]{GiHa02} Gibbons, G., Hartnoll, S. A.: Gravitaional stability in higher dimensions. Phys. Rev. D(3).
        \textbf{67}, no. 6, 064024, 17pp (2002).
\bibitem[GiHaP03]{GiHaP03} Gibbons, G., Hartnoll, S. A., Pope, C.: B\"ohm and Einstein-Sasaki metrics, blackholes
        and cosmological event horizons, Phys. Rev. D(3).  \textbf{67}, no. 8, 084024, 24pp (2003).
\bibitem[Gr90]{Gr90} Grunewald, R.: Six-dimensional Riemannian manifolds with a real Killing spinor.
      Ann. Glob. Anal. Geom. \textbf{8}, 43-59 (1990).
\bibitem[Hi86]{Hi86} Hijazi, O.: A conformal lower bound fo the smallest eigenvalue of the Dirac operator and
            Killing spinors. Comm. Math. Phys. \textbf{104}, 151-162 (1986).
\bibitem[Hit81]{Hit81} Hitchin, N., K\"ahlerian twistor spaces. Proc. Lond. Math. Soc. \textbf{43}, 133-150 (1981).
\bibitem[J69]{J69} Jensen, G.: Homogeneous Einstein spaces of  dimension $4$. J. Diff. Geom. \textbf{3}, 309-349 (1969).
\bibitem[Ker98]{Ker98} Kerr, M.: New examples of homogeneous Einstein metrics. Michigan Math. J. \textbf{45}, 115-134 (1998).
\bibitem[Koi80]{Koi80} Koiso, N.: Rigidity and stability of Einstein metrics--the case of compact symmetric spaces.
         Osaka J. Math. \textbf{17}, 51-73 (1980).
\bibitem[KoV93]{KoV93} Kowalski, O., Vlasek, Z.:  Homogeneous Einstein metrics on Aloff-Wallach spaces.
         Diff. Geom. Appl. \textbf{3}, 157-167 (1993).
\bibitem[KS91]{KS91} Kreck, M., Stolz, S.: Some non-diffeomorphic homeomorphic $7$-manifolds with positive sectional
            curvature. J. Diff. Geom. \textbf{33}, 465-486 (1991); errata J. Diff. Geom. \textbf{49}, 203-204 (1998).
\bibitem[Kr15]{Kr15} Kr\"{o}ncke, K.: Stability and instability of Ricci solitons. Calc. Var. PDE. \textbf{53}, 265-287 (2015).
\bibitem[Kr17]{Kr17} Kr\"{o}ncke, K.: Stable and unstable Einstein warped products. Trans. Am. Math. Soc. \textbf{365}(9), 6537-6563 (2017).
\bibitem[Mo97]{Mo97} Moroianu, A.: Parallel and Killing spinors on ${\rm Spin}^c$ manifolds. Comm. Math. Phys.
        \textbf{187}, 417-427 (1997).
\bibitem[MU80]{MU80} Muto, H., Urakawa, H.: On the least positive eigenvalue of the Laplacian on compact homogeneous
      spaces. Osaka Math. J. \textbf{17}, 471-484 (1980).
\bibitem[Nik04]{Nik04} Nikonorov, Yu. G.: Compact Homogeneous Einstein 7-manifolds. Geom. Dedicata \textbf{109}, 7-30 (2004).
\bibitem[NR03]{NR03} Nikonorov, Yu. G., Rodionov, E. D.: Compact homogeneous Einstein $6$-manifolds. Diff. Geom. Appl.
         \textbf{19}, 369-378 (2003).
\bibitem[PP84]{PP84} Page, Don N., Pope, C. N.: New squashed solutions of $d=11$ supergravity. Phys. Lett. \textbf{147B}, 55-60 (1984).
\bibitem[Pe02]{Pe02} Perelman, G.: The entropy formula for the Ricci flow and its geometric applications.
         arXiv:0211159, (2002).
\bibitem[PS91]{PS91} Poon, Y. S., Salamon, S.: Quaternionic K\'ahler $8$-manifolds with positive scalar
       curvature, J. Diff. Geom. \textbf{33},    363-378 (1991).
\bibitem[Sa70]{Sa70} Sagle, A.: Some homogeneous Einstein manifold. Nagoya Math. J. \textbf{39}, 81-106 (1970).
\bibitem[W82]{W82} Wang, M.: Some Examples of Homogeneous Einstein manifolds in dimension seven.
         Duke Math. J. \textbf{49}(1), 23-28 (1982).
\bibitem[W89]{W89} Wang, M. Y.: Parallel spinors and parallel forms. Ann. Glob. Anal. Geom. \textbf{7}(1), 59-68 (1989).
\bibitem[Wan17]{Wan17} Wang, C.: Stability of Riemannian manifolds with Killing spinors. Internat. J. Math. \textbf{28}(1), 1750005, 19 pages (2017).
\bibitem[WW18]{WW18} Wang, C., Wang, Y. K.: Stability of Einstein metrics on fiber bundles. arXiv:1808.05679
\bibitem[WZ85]{WZ85} Wang, M.,  Ziller, W.: On normal homogeneous Einstein manifolds. Ann. scient. {\'E}c. Norm. Sup.
      $4^e$ s{\'e}rie, \textbf{t. 18}, 563-633 (1985)
\end{thebibliography}
\end{document}